\documentclass[preprint,12pt]{elsarticle}  %

\usepackage{amssymb}
\usepackage{amsmath}
\usepackage{amsthm} 
\usepackage{booktabs} 
\usepackage[hidelinks]{hyperref}
\usepackage{graphicx} 
\usepackage{tabularx}   
\usepackage{adjustbox}  
\usepackage{multirow}
\usepackage{subcaption}
\usepackage{float} 
\usepackage{placeins} 

\input{arxiv_submitted_stamp_LAA.tex}

\theoremstyle{definition}
\newtheorem{Def}{Definition}
\theoremstyle{definition}
\newtheorem{The}{Theorem}
\theoremstyle{definition}
\newtheorem{Lem}{Lemma}
\theoremstyle{definition}
\newtheorem{Alg}{Algorithm}
\theoremstyle{definition}
\newtheorem{Pro}{Proposition}
\theoremstyle{remark}
\newtheorem{Rem}{Remark}
\theoremstyle{remark}
\newtheorem{Ex}{Example}
\theoremstyle{remark}
\newtheorem{Cor}{Corollary}
\DeclareFontEncoding{LS1}{}{}
\DeclareFontSubstitution{LS1}{stix2}{m}{n}
\DeclareMathAlphabet{\mathscr}{LS1}{stix2scr}{m}{n}
\newcommand{\err}{\ensuremath{ \mathscr{r} }}

\DeclareRobustCommand{\i}{\ensuremath{\mathbf{i}}}
\DeclareRobustCommand{\j}{\ensuremath{\mathbf{j}}}
\DeclareRobustCommand{\k}{\ensuremath{\mathbf{k}}}
\renewcommand{\Re}{\operatorname{Re}}
\renewcommand{\Im}{\operatorname{Im}}
\newcommand{\RR}{\mathbb{R}}
\newcommand{\CC}{\mathbb{C}}
\newcommand{\HH}{\mathbb{H}}

\DeclareMathOperator{\diag}{diag}
\DeclareMathOperator{\rank}{rank}
\DeclareMathOperator{\rvec}{vec}

\DeclareMathOperator{\Range}{range}
\DeclareMathOperator{\Ker}{ker}
\newcommand{\DOI}[1]{\href{https://doi.org/#1}{DOI:\,\nolinkurl{#1}}}
\journal{Linear Algebra and its Applications}
\biboptions{sort&compress} 

\begin{document}
%
\begin{frontmatter}
\ArxivSubmittedStampOverlay
\title{Computing Left Eigenvalues of Quaternion Matrices}
\author{Michael Sebek} 
\affiliation{organization={Department of Control Engineering, Faculty of Electrical Engineering, Czech Technical University in Prague},
            addressline={Technická 2}, 
            city={Prague},
            postcode={16000}, 
            country={Czech Republic}}        
\ead{michael.sebek@fel.cvut.cz}
\begin{abstract}
We present a practical Newton-based method for computing left eigenvalues of quaternion matrices. It uses only standard real/complex linear-algebra kernels via embeddings and applies to matrices of any size. Extensive tests on literature examples and benchmark ensembles, together with a compact MATLAB reference implementation, demonstrate reproducible, certificate-based computations up to size \(64\times 64\), including the detection of multiple spherical components and non-generic phenomena such as more than \(n\) isolated left eigenvalues and left-spectrum deficiency.
\end{abstract}
\begin{keyword}
quaternionic matrix \sep left eigenvalue \sep Newton method
\MSC[2020] 15A33 \sep 15A18 \sep 65F15 \sep 65H10
\end{keyword}
\end{frontmatter}

\section{Introduction}
\label{sec:introduction}
Quaternion matrices arise naturally whenever one needs an algebraic language for coupled $3$-- and $4$--dimensional phenomena, including spatial rotations and kinematics, polarization and vector-sensor signal processing, and various formulations of control and estimation. From a linear-algebraic viewpoint, $\HH^{n\times n}$ inherits many familiar constructions, yet the noncommutativity of $\HH$ makes even basic spectral questions substantially more delicate than over $\CC$; see, e.g.,
\cite{Ward1997,Rodman2014,ColomboSabadiniStruppa2011} for background and further pointers.

In this paper, we focus on \emph{left} eigenvalues: a quaternion $\lambda\in\HH$ is a left eigenvalue of
$A\in\HH^{n\times n}$ if there exists a nonzero vector $v\in\HH^n$ such that
\begin{equation}\label{eq:intro-left-eig}
A v = \lambda v .
\end{equation}
This relation is consistent with the intrinsic right scaling of eigenvectors ($v\mapsto vq$), which leaves $\lambda$
unchanged. For numerical computation, one must additionally fix a (real) \emph{gauge} for $v$ to make local corrections
well posed.

Unlike over $\CC$, the \emph{left} spectrum is not invariant under quaternionic similarity, and it can contain both isolated
points and continua (most prominently, spherical families); see, e.g., \cite{HuangSo2001,FarenickPidkowich2003}.
For \(2\times2\) matrices, the structure is essentially complete: depending on explicit algebraic conditions, one obtains either two distinct isolated left eigenvalues, a single isolated left eigenvalue (i.e., two coincident ones), or an entire \(2\)--sphere of left eigenvalues \cite{HuangSo2001}.
 In higher dimensions, several frameworks support
qualitative analysis, including characteristic-map constructions that represent $\sigma_\ell(A)$ as a zero set in $\HH\simeq\RR^4$ and illuminate rank-degenerate spectral points
\cite{MaciasVirgosPereiraSaez2014,MaciasVirgosPereiraSaez2014b}; see also the general background
\cite{Zhang1997,Zhang2007,Rodman2014}.

What is much less settled is \emph{how to compute} left eigenvalues reliably for general $A\in\HH^{n\times n}$ when $n$ is moderate or large. Noncommutativity brings several practical obstacles: (i) gauge freedom must be handled explicitly,
(ii) the finite left spectrum can deviate from the generic \(n\)-point picture (it may be left-spectrum deficient or contain more than \(n\) isolated eigenvalues), and
(iii) continuous components imply non-isolated solution sets and rank-deficient Jacobians, so standard local Newton theory and simple ``enumeration'' logic do not apply.
Moreover, eliminations in the spectral parameter typically lead to high-degree nonlinear problems in $\RR^4$ whose
cost and numerical sensitivity can grow rapidly with $n$. A further open issue is \emph{diagnostics}: from numerical data, one would like to distinguish isolated eigenvalues from samples of a spherical component, and to provide certificates that remain
meaningful under ill-conditioning and degeneracy.

The goal of this work is practical: to provide an effective computational method that (i) keeps a quaternionic interface at
the user level, (ii) uses only standard real/complex linear-algebra kernels internally via embeddings, and (iii) returns
reproducible outputs together with certificates and diagnostics that help interpret atypical spectral situations. Our approach
is built around a direct eigenpair formulation: we apply Newton's method to the defect equation $Av-\lambda v=0$ augmented
by a simple gauge constraint. This avoids determinantal elimination in the spectral parameter and leads to correction steps that reduce to structured linear solves in an embedded real/complex representation.

The main contributions of the paper are as follows.
\begin{itemize}
\item We derive a Newton iteration for quaternionic left eigenpairs that explicitly fixes the eigenvector gauge, yielding a square real-analytic system and a transparent Jacobian structure. Under a natural nonsingularity condition, we obtain the expected local quadratic convergence for isolated eigenpairs.

\item We build a robust solver around this Newton core using a trial-based multi-start strategy with acceptance, restart, and de-duplication logic tailored to quaternionic spectra. The intent is not only to obtain approximations, but to obtain \emph{reproducible} candidates together with diagnostics that remain meaningful under ill-conditioning, degeneracy, and
non-isolated behavior.

\item We validate the method on classical low-dimensional cases with known closed-form spectra (including spherical families) and on explicit matrices drawn from the literature, and we complement these checks by Monte Carlo experiments on several standard matrix families (dense i.i.d., Hermitian, triangular, and sparse), illustrating both typical performance and rare
hard instances.

\item We provide a compact, stand-alone MATLAB reference implementation intended to support reuse and independent verification. Extended tables and figures, the full experimental protocol and reported metrics, and additional worked examples are deferred to the supplementary material (Supplement~A and Supplement~B).
\end{itemize}

The paper is organized as follows. Section~\ref{sec_quaternions_matrices} fixes notation and recalls the embedding used internally for the linear-algebra kernels (with additional material collected in \ref{app:complex_embedding}). Section~\ref{sec_left_eigs_background} summarizes the left-eigenvalue background needed later. Sections~\ref{sec_newton_left_eigs}--\ref{sec:refinement}
develop the Newton core and the global solver logic, while Section~\ref{sec:illustrative_nongeneric}
illustrates representative non-generic phenomena that motivate the refinement and certification stages.
\ref{app:implementation} summarizes the implementation and validation details.
Supplement~A contains the complete experimental protocol, per-size tables and benchmark figures, and implementation-facing diagnostics, while
Supplement~B collects additional worked examples (a ``Zoo''), including $K>n$ cases, left-spectrum deficient constructions,
and spherical or mixed configurations.

\section{Quaternions and matrices}
\label{sec_quaternions_matrices}
\subsection{The skew field \texorpdfstring{$\HH$}{H}}
\label{subsec_quaternions}
We briefly fix the notation for quaternions, quaternion matrices, and the real embedding used by the Newton step; see, e.g., \cite{Zhang1997,Rodman2014}.

Throughout the paper, $\HH$ denotes the skew field of real quaternions with units $\i,\j,\k$ and relations $\i^2=\j^2=\k^2=\i\j\k=-1$.
Every quaternion can be written as
\[
q=a+b\i+c\j+d\k,\qquad a,b,c,d\in\RR,
\]
with conjugation $\bar q:=a-b\i-c\j-d\k$, real part $\Re(q):=a$, imaginary part $\Im(q):=b\i+c\j+d\k$, and norm $|q|:=\sqrt{q\bar q}\in\RR_{\ge0}$. For $q\neq 0$, $q^{-1}=\bar q/|q|^2$. We also write $\HH^\times:=\HH\setminus\{0\}$ for the group of units.

\subsection{Quaternion vectors and matrices}
\label{subsec_quaternion_matrices}

We treat $\HH^n$ as a right $\HH$-module, so scalar multiplication is $x\alpha$ and the matrices act as right-linear maps:
$A(x\alpha)=(Ax)\alpha$; see \cite{Rodman2014,Zhang1997}.
Left multiplication $\alpha x$ is nevertheless well-defined componentwise and will be used when needed (in particular in the definition of left eigenvalues).

We write $x^\ast$ for the conjugate transpose and use the standard (right) inner product
\[
\langle x,y\rangle := x^\ast y\in\HH,
\qquad
\|x\|_2^2:=x^\ast x\in\RR_{\ge 0}.
\]
For $A\in\HH^{n\times n}$ we denote by
\[
\ker(A):=\{x\in\HH^n:\ Ax=0\},\quad
\Range(A):=\{Ax:\ x\in\HH^n\}
\]
the right null space and right range, respectively.

We use the rank notion compatible with right-linear maps $x\mapsto Ax$, namely the \emph{column right rank} $\rank_{\mathrm{col,right}}(A)$; equivalently one may use the \emph{row left rank} $\rank_{\mathrm{row,left}}(A)$, and these coincide. The following proposition is also standard, see, e.g., \cite{Zhang1997,Rodman2014}.
\begin{Pro}[Rank--nullity over $\HH$]
\label{pro_rank_nullity}
Let $A\in\HH^{n\times n}$ and set
\[
r:=\rank_{\mathrm{col,right}}(A)=\rank_{\mathrm{row,left}}(A),\qquad m:=n-r.
\]
Then $\dim\ker(A)=m$.
\end{Pro}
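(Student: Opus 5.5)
The plan is to run the usual rank–nullity argument in the category of right $\HH$-vector spaces, where linear algebra over a division ring behaves as over a field. We regard $\HH^n$ as a right vector space over the skew field $\HH$ and $L_A:x\mapsto Ax$ as a right-linear map, so that $L_A(x\alpha)=(Ax)\alpha$. Over a division ring the basic facts still hold: every spanning set contains a basis, every linearly independent set extends to a basis, and any two bases have the same cardinality. The proofs are the field proofs verbatim (Steinitz exchange), provided all scalars act on the right and are inverted on the right. Hence $\dim$ is well defined for right subspaces such as $\ker(A)$ and $\Range(A)$.

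First we identify $\Range(A)$ with the right span of the columns $a_1,\dots,a_n$ of $A$. This uses $Ax=\sum_j a_j x_j$, which is the definition of the matrix–vector product with the scalars $x_j$ on the right. By definition of the column right rank, $\dim\Range(A)=r$.

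Next we prove $\dim\ker(A)+\dim\Range(A)=n$ by the standard extension argument:
\begin{itemize}
\item Choose a basis $u_1,\dots,u_k$ of $\ker(A)$ and extend it to a basis $u_1,\dots,u_n$ of $\HH^n$.
\item The vectors $Au_{k+1},\dots,Au_n$ span $\Range(A)$, because $A\bigl(\sum_i u_i\alpha_i\bigr)=\sum_{i>k}(Au_i)\alpha_i$.
\item They are right-independent. If $\sum_{i>k}(Au_i)\alpha_i=0$, then $\sum_{i>k}u_i\alpha_i\in\ker(A)$, so it equals $\sum_{i\le k}u_i\beta_i$. Independence of the full basis then forces all $\alpha_i=0$.
\end{itemize}
It follows that $k=n-r=m$.

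The equality $\rank_{\mathrm{col,right}}=\rank_{\mathrm{row,left}}$ is stated as known, so I would simply cite it. If it had to be proved, the cleanest route is Gaussian elimination over $\HH$. Left multiplication by invertible elementary matrices preserves $\ker(A)$ and therefore the column right rank. It also preserves the left row space. One reduces $A$ to row echelon form, where both ranks equal the number of pivots.

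The only delicate point, and the main obstacle, is keeping the side of scalar multiplication consistent throughout. The kernel must be a right subspace. This holds precisely because $A(x\alpha)=(Ax)\alpha$, whereas $x\mapsto \alpha x$ would not preserve $\ker(A)$. The exchange lemma must likewise be applied with right inverses of the scalars. Once the setting is fixed as right vector spaces, every step is the commutative proof unchanged.
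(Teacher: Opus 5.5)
The paper does not prove this proposition: it labels it standard and defers to Zhang (1997) and Rodman (2014). Your proposal is correct and is essentially the textbook argument those references rely on. It consists of dimension theory for right vector spaces over the division ring $\HH$, the identification of $\Range(A)$ with the right span of the columns via $Ax=\sum_j a_j x_j$, and the basis-extension proof that $\dim\ker(A)+\dim\Range(A)=n$. You handle the side conventions correctly throughout; in particular, $\ker(A)$ is a right subspace precisely because $A(x\alpha)=(Ax)\alpha$.

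Compared with the paper, your version is self-contained. It also includes a valid sketch of $\rank_{\mathrm{col,right}}=\rank_{\mathrm{row,left}}$: elementary row operations are left multiplications by invertible matrices, so they preserve both $\ker(A)$ and the left row span, and in echelon form both ranks equal the number of pivots.

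One might instead try to derive the statement from real rank--nullity for $\rho(A)$. However, the paper's Theorem~\ref{th_rho_kernel_rank} uses this proposition in the opposite direction. That route would also still need $\dim_\RR V=4\dim_\HH V$ for right $\HH$-subspaces $V$, which is the same basis theory you set up. Your direct argument is therefore the natural non-circular choice. It also supplies the fact, used tacitly in that theorem, that $\ker(A)\cong\HH^m$ as a right module.
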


\subsection{Real embedding}
\label{subsec_real_embedding}
For Newton steps (Section~\ref{sec_newton_left_eigs}), we solve a real linear system. This is facilitated by a real embedding representing quaternionic left multiplication; see, e.g.,
\cite{Zhang1997,Rodman2014}.

\begin{Def}[Real coordinate map on $\HH^n$]
\label{def_real_vec_map}
Define the real-linear isomorphism $\rvec:\HH^n\to\RR^{4n}$ by stacking real coefficients:
if $x_k=a_k+b_k\i+c_k\j+d_k\k$, then
\[
\rvec(x):=(a_1,b_1,c_1,d_1,\dots,a_n,b_n,c_n,d_n)^\top\in\RR^{4n}.
\]
For $q=a+b\i+c\j+d\k\in\HH$ we also write $\rvec(q):=(a,b,c,d)^\top\in\RR^4$.
\end{Def}

\begin{Def}[Real left-multiplication matrix]
\label{def_Lq}
For $q=a+b\i+c\j+d\k\in\HH$ define $L(q)\in\RR^{4\times 4}$ by
\begin{equation}
\label{eq_Lq}
L(q):=
\begin{bmatrix}
 a & -b & -c & -d\\
 b &  a & -d &  c\\
 c &  d &  a & -b\\
 d & -c &  b &  a
\end{bmatrix}.
\end{equation}
\end{Def}

\begin{The}[Real representation of quaternion multiplication]
\label{th_Lq_properties}
For all $p,q\in\HH$,
\[
\rvec(qp)=L(q)\,\rvec(p).
\]
Moreover, $L(\cdot)$ is a real-algebra homomorphism:
\[
L(p+q)=L(p)+L(q),\qquad L(pq)=L(p)L(q),\qquad L(1)=I_4.
\]
\end{The}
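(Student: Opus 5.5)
The plan is to prove the first identity by a direct coordinate computation and then to derive all the algebraic properties from it, using the fact that $\rvec:\HH\to\RR^4$ is a real-linear isomorphism (Definition~\ref{def_real_vec_map}).

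\textbf{Step 1: the identity $\rvec(qp)=L(q)\rvec(p)$.}
Write $q=a+b\i+c\j+d\k$ and $p=x+y\i+z\j+w\k$. Expand $qp$ with the relations $\i^2=\j^2=\k^2=-1$ and
\[
\i\j=\k,\quad \j\k=\i,\quad \k\i=\j,\quad \j\i=-\k,\quad \k\j=-\i,\quad \i\k=-\j,
\]
all of which follow from $\i\j\k=-1$. Collecting coefficients gives
\begin{align*}
qp = {}&(ax-by-cz-dw)+(bx+ay-dz+cw)\i\\
&+(cx+dy+az-bw)\j+(dx-cy+bz+aw)\k .
\end{align*}
Read against the rows of \eqref{eq_Lq}, these four coefficients are exactly the entries of $L(q)\,(x,y,z,w)^\top$. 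This sign bookkeeping is the only real work; the main risk is a sign slip, for example in the $\j\k$ and $\k\j$ terms.

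\textbf{Step 2: additivity and the unit.}
$L(q)$ is visibly real-linear in the coefficients $(a,b,c,d)$, so $L(p+q)=L(p)+L(q)$ (and more generally $L(\alpha q)=\alpha L(q)$ for $\alpha\in\RR$). Substituting $a=1$, $b=c=d=0$ gives $L(1)=I_4$.

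\textbf{Step 3: multiplicativity.}
Use associativity of $\HH$ together with Step 1. For every $r\in\HH$,
\[
L(pq)\rvec(r)=\rvec((pq)r)=\rvec(p(qr))=L(p)\rvec(qr)=L(p)L(q)\rvec(r).
\]
Since $\rvec$ is onto $\RR^4$, the matrices $L(pq)$ and $L(p)L(q)$ agree on all of $\RR^4$, so they are equal. This avoids multiplying two $4\times4$ matrices explicitly.
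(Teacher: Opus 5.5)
Your proof is correct and complete. The coefficients you collect for $qp$ match the rows of \eqref{eq_Lq} exactly, and deriving $L(pq)=L(p)L(q)$ from associativity plus surjectivity of $\rvec$ avoids multiplying two $4\times4$ matrices. The paper gives no proof of its own here; it records the theorem as a standard fact, citing \cite{Zhang1997,Rodman2014}, and your argument is the routine verification that citation stands for.
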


\begin{Def}[Real embedding of quaternion matrices]
\label{def_rho}
For $A=[a_{rs}]\in\HH^{n\times n}$ define $\rho(A)\in\RR^{4n\times 4n}$ as the block matrix
\[
\rho(A):=\bigl[L(a_{rs})\bigr]_{r,s=1}^n,
\]
i.e., the $(r,s)$ block of size $4\times 4$ equals $L(a_{rs})$.
\end{Def}

\begin{The}[Intertwining and algebraic rules for $\rho(\cdot)$]
\label{th_rho_properties}
For all $A,B\in\HH^{n\times n}$, $\alpha\in\RR$, and $x\in\HH^n$,
\[
\begin{array}{ll}
\rvec(Ax)=\rho(A)\,\rvec(x), & \rho(A+B)=\rho(A)+\rho(B),\\[2pt]
\rho(AB)=\rho(A)\rho(B), & \rho(\alpha I)=\alpha I.
\end{array}
\]
In particular, $A$ is invertible over $\HH$ if and only if $\rho(A)$ is invertible over $\RR$.
\end{The}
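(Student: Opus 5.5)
The plan is to reduce everything to the scalar statement of Theorem~\ref{th_Lq_properties}, applied entrywise through the block structure of $\rho$. \emph{First}, for the intertwining identity $\rvec(Ax)=\rho(A)\rvec(x)$, write $(Ax)_r=\sum_s a_{rs}x_s$. The map $\rvec$ is real-linear and acts blockwise on the components. Hence the $r$-th $4$-block of $\rvec(Ax)$ is
\[
\sum_s \rvec(a_{rs}x_s)=\sum_s L(a_{rs})\rvec(x_s),
\]
by Theorem~\ref{th_Lq_properties}. This is exactly the $r$-th block row of $\rho(A)\rvec(x)$. Additivity $\rho(A+B)=\rho(A)+\rho(B)$ is immediate from $L(p+q)=L(p)+L(q)$ entrywise. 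For $\rho(\alpha I)=\alpha I$, note that $L(\alpha)=\alpha I_4$ for real $\alpha$, directly from \eqref{eq_Lq}, and $L(0)=0$ off the diagonal.

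\emph{Second}, for multiplicativity I would compute the $(r,t)$ block:
\[
\rho(AB)_{rt}=L\Bigl(\sum_s a_{rs}b_{st}\Bigr)=\sum_s L(a_{rs})L(b_{st})=\bigl(\rho(A)\rho(B)\bigr)_{rt},
\]
using additivity and $L(pq)=L(p)L(q)$. Alternatively, one can get it from intertwining. We have $\rho(AB)\rvec(x)=\rvec(ABx)=\rho(A)\rho(B)\rvec(x)$ for all $x$, and $\rvec$ is surjective onto $\RR^{4n}$, so the matrices agree. Either route is routine.

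\emph{Third}, the invertibility equivalence. If $AB=BA=I$ over $\HH$, then multiplicativity and $\rho(I)=I$ give $\rho(A)\rho(B)=\rho(B)\rho(A)=I$. Conversely, suppose $\rho(A)$ is invertible. Then $Ax=0$ implies $\rho(A)\rvec(x)=0$, hence $x=0$, so $\ker(A)=\{0\}$. By Proposition~\ref{pro_rank_nullity} the right rank is $n$, so the columns of $A$ form a right basis of $\HH^n$. Each $e_j$ is therefore $Ab_j$ for some $b_j$, which yields $B$ with $AB=I$.

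The main obstacle is obtaining a two-sided inverse in the noncommutative setting. I would settle it with $\rho$: from $AB=I$ we get $\rho(A)\rho(B)=I$, and since these are real square matrices, also $\rho(B)\rho(A)=I$. Then $\rho(BA)=I=\rho(I)$. Because $\rho$ is injective (each block $L(q)$ determines $q$ through its first column $\rvec(q)$), it follows that $BA=I$.
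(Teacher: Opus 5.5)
Your proof is correct. The paper gives no proof of this theorem: it is stated as standard, with a pointer to \cite{Zhang1997,Rodman2014}. So there is no competing route to compare against. Your blockwise reduction to Theorem~\ref{th_Lq_properties} (intertwining, additivity and multiplicativity checked entrywise, then the two directions of the invertibility claim) is the routine verification those references have in mind.

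One optional simplification in the converse direction: once $\rho(A)$ is invertible, the intertwining identity and the bijectivity of $\rvec$ already show that $x\mapsto Ax$ is a bijection of $\HH^n$. This gives $e_j=Ab_j$ directly, without Proposition~\ref{pro_rank_nullity} or dimension theory for right $\HH$-modules. Your injectivity-of-$\rho$ argument for $BA=I$ is fine. An equivalent alternative is $A(BA-I)=(AB)A-A=0$ together with $\ker(A)=\{0\}$, applied column by column.
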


\begin{The}[Kernel and the factor $4$]
\label{th_rho_kernel_rank}
Let $A\in\HH^{n\times n}$. Then
\[
\rvec\bigl(\ker(A)\bigr)=\ker\bigl(\rho(A)\bigr)\subset\RR^{4n},
\qquad
\dim\ker\bigl(\rho(A)\bigr)=4\,\dim\ker(A).
\]
Equivalently, if $r:=\rank_{\mathrm{col,right}}(A)$ and $m:=n-r$, then
\[
\rank\bigl(\rho(A)\bigr)=4r,\qquad \dim\ker\bigl(\rho(A)\bigr)=4m.
\]
\end{The}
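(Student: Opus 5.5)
The plan is to obtain the kernel identity directly from the intertwining rule in Theorem~\ref{th_rho_properties}, and then to count real dimensions through the identification of $\HH$ with $\RR^4$.

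For the first claim, take $x\in\HH^n$. By Theorem~\ref{th_rho_properties}, $\rvec(Ax)=\rho(A)\rvec(x)$. Since $\rvec$ is a real-linear isomorphism, $Ax=0$ holds if and only if $\rho(A)\rvec(x)=0$. Hence $\rvec(\ker A)\subseteq\ker\rho(A)$. Conversely, every $y\in\RR^{4n}$ can be written as $y=\rvec(x)$ for a unique $x$, so $\rho(A)y=0$ forces $Ax=0$. This gives equality of the two sets.

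For the dimension count, I will use that $\ker(A)$ is a right $\HH$-submodule of $\HH^n$. It is closed under right scalar multiplication because $A(x\alpha)=(Ax)\alpha$. Let $m=\dim\ker(A)$ be its right $\HH$-dimension, and fix a right basis $u_1,\dots,u_m$. I claim the $4m$ vectors $u_j e$, with $e\in\{1,\i,\j,\k\}$, form a real basis of $\ker(A)$.

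Spanning is immediate: any $x\in\ker(A)$ can be written as $x=\sum_j u_j\alpha_j$, and expanding each $\alpha_j=a_j+b_j\i+c_j\j+d_j\k$ expresses $x$ as a real combination of the $u_je$. For independence, suppose a real combination $\sum_j u_j(a_j+b_j\i+c_j\j+d_j\k)$ vanishes. Right $\HH$-independence of the $u_j$ then forces each quaternion coefficient to be zero, so all real coefficients vanish. Consequently the real dimension of $\ker(A)$ is $4m$. Since $\rvec$ is a real isomorphism, $\dim_\RR\ker\rho(A)=4m$ as well.

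The main point to handle with care is this passage from quaternionic to real dimension. It requires that $\ker(A)$ really is a right submodule with a well-defined right dimension, a fact supplied by the standard theory behind Proposition~\ref{pro_rank_nullity}. The independence check itself is routine.

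Finally, Proposition~\ref{pro_rank_nullity} gives $m=n-r$. Applying real rank–nullity to the $4n\times4n$ matrix $\rho(A)$ gives $\rank\rho(A)=4n-4m=4r$, which completes the equivalent formulation.
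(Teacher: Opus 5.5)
Your proposal is correct and follows essentially the same route as the paper. Both derive the kernel identity from $\rvec(Ax)=\rho(A)\rvec(x)$, get the factor $4$ from the real dimension of a right $\HH$-module of quaternionic dimension $m$, and obtain the rank statement from real rank--nullity together with Proposition~\ref{pro_rank_nullity}; the only difference is that you spell out the identification $\ker(A)\cong\HH^m$ via the explicit real basis $\{u_j e\}$, which the paper leaves implicit.
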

\begin{proof}
By Theorem~\ref{th_rho_properties}, $Ax=0$ if and only if $\rho(A)\rvec(x)=0$, so 
$\rvec(\ker(A))$ $=\ker(\rho(A))$.
If $\ker(A)$ has quaternionic dimension $m$, then $\ker(A)\cong\HH^m$ as a right $\HH$-module, which has real dimension $4m$. Since $\rvec$ is a real-linear isomorphism, $\dim\ker(\rho(A))=4m=4\dim\ker(A)$. The rank identities follow from real rank--nullity in $\RR^{4n}$ and Proposition~\ref{pro_rank_nullity}.
\end{proof}

\section{Left eigenvalues}
\label{sec_left_eigs_background}
\subsection{Left eigenvalues: definition and basic facts}
\label{subsec_left_definition_basic}
We recall the definition of left eigenvalues and collect basic properties used later for Newton formulations and residual certification.

\begin{Def}[Left eigenvalues]
\label{def_left_eigs}
Let $A\in\HH^{n\times n}$.
A quaternion $\lambda\in\HH$ is a \emph{left eigenvalue} of $A$ if there exists
$x\in\HH^n\setminus\{0\}$ such that $Ax=\lambda x$.
The set of all left eigenvalues is denoted by $\sigma_\ell(A)$.
\end{Def}

\begin{Rem}[Distinct left eigenvalues]
Throughout, $\sigma_\ell(A)$ denotes the \emph{set of distinct} left eigenvalues (no multiplicities).
In particular, in the $2\times2$ Huang--So reduction to a quaternionic quadratic equation, the two
solutions may coalesce, yielding a \emph{left-spectrum singleton}.
We do not introduce or use any notion of multiplicity of left eigenvalues in this paper.
\end{Rem}

\begin{The}[Existence of left eigenvalues]
\label{th_wood_existence}
Every $A\in\HH^{n\times n}$ has at least one left eigenvalue, i.e. $\sigma_\ell(A)\neq\emptyset$.
\end{The}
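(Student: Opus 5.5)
The plan is to follow Wood's topological argument. We reduce the existence of a left eigenvalue to a statement about the degree of a map between spheres, and then derive a contradiction from the homotopy type of a suitable map.

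\textbf{Reduction to singularity.} First reformulate the problem. By Theorem~\ref{th_rho_properties} and Theorem~\ref{th_rho_kernel_rank}, $\lambda\in\sigma_\ell(A)$ if and only if $A-\lambda I$ is singular over $\HH$, i.e.\ $\det\rho(A-\lambda I)=0$. Here $(\lambda I)x=\lambda x$ componentwise, and $\rho(A-\lambda I)=\rho(A)-\mathrm{diag}(L(\lambda),\dots,L(\lambda))$. So it suffices to show that the real polynomial function $f(\lambda):=\det\rho(A-\lambda I)$ on $\HH\cong\RR^4$ has a zero. Note that $\det L(q)=|q|^4$, so $f(\lambda)=|\lambda|^{4n}+(\text{lower order terms})$ and $f$ never changes sign at infinity. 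A naive intermediate-value argument therefore fails, and we need topology instead.

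\textbf{Topological argument.} Suppose to the contrary that $A-\lambda I$ is invertible for every $\lambda\in\HH$. Then $\lambda\mapsto A-\lambda I$ is a continuous map $\HH\to GL_n(\HH)$. Restrict it to spheres $|\lambda|=R$. For $R$ large, $A-\lambda I=-\lambda(I-\lambda^{-1}A)$, which is homotopic through invertible matrices to $-\lambda I$. Contracting the radius $R\to 0$ instead shows that the restricted map $S^3\to GL_n(\HH)$ is null-homotopic, since its image contracts to the constant $A$. Hence the map $S^3\to GL_n(\HH)$, $\lambda\mapsto \lambda I_n$ (equivalently $-\lambda I_n$), would be null-homotopic. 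Composing with the deformation retraction onto $Sp(n)$, it suffices to show that $\lambda\mapsto\lambda I_n$ is essential in $\pi_3(Sp(n))\cong\mathbb Z$.

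\textbf{Main obstacle.} The hard step is showing that the class of $\lambda\mapsto\lambda I_n$ in $\pi_3(Sp(n))$ is nonzero. I would use the fact that $\pi_3(Sp(n))\cong\mathbb Z$ is generated by $\lambda\mapsto\mathrm{diag}(\lambda,1,\dots,1)$ for all $n\ge1$. This follows from the fibration $Sp(n-1)\to Sp(n)\to S^{4n-1}$, since $\pi_3(S^{4n-1})=\pi_4(S^{4n-1})=0$ for $n\ge2$. The map $\lambda I_n$ is pointwise the product of the $n$ commuting maps that put $\lambda$ in the $k$th diagonal slot. Each of these is conjugate by a permutation matrix (a path in the connected group $Sp(n)$) to the generator. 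In $\pi_3$ of a topological group, the pointwise product equals the homotopy sum, so $[\lambda I_n]=n\cdot$generator, which is nonzero. This contradicts null-homotopy, so some $\lambda$ makes $A-\lambda I$ singular, and by Theorem~\ref{th_rho_kernel_rank} there is a nonzero $x$ with $Ax=\lambda x$.

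If a more elementary route is preferred, the step above can be replaced by a degree computation for the map $S^{4n-1}\to S^{4n-1}$ induced via $\rho$. That alternative has the same core difficulty, namely that the answer is $n\neq 0$.
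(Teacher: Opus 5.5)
Your argument is correct and is essentially Wood's homotopy-theoretic proof, showing that $\lambda\mapsto\lambda I_n$ represents $n$ times the generator of $\pi_3(Sp(n))\cong\mathbb{Z}$ and so cannot be null-homotopic; this is the proof the paper relies on, since it gives no argument of its own for Theorem~\ref{th_wood_existence} and only cites \cite{Wood1985} (and \cite{MaciasVirgosPereiraSaez2014}). The one point you leave implicit is that your null-homotopies are free rather than based, which is harmless because in the path-connected topological group $GL_n(\HH)$ free homotopy classes of maps $S^3\to GL_n(\HH)$ coincide with $\pi_3(GL_n(\HH))$; the determinant remarks in your first paragraph are not needed for the proof.
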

The first proof is due to Wood \cite{Wood1985}; see also \cite{MaciasVirgosPereiraSaez2014}.

\begin{Pro}[Right-scaling invariance and uniqueness of the eigenvalue]
\label{pro_gauge_freedom}
Let $A\in\HH^{n\times n}$ and suppose $Ax=\lambda x$ for some $x\in\HH^n\setminus\{0\}$ and $\lambda\in\HH$.
Then:
\begin{enumerate}
\item for every $q\in\HH^\times$, also $A(xq)=\lambda(xq)$ (a left eigenvector is defined only up to right scaling);
\item if $Ax=\mu x$ for some $\mu\in\HH$, then $\mu=\lambda$ (for a fixed nonzero $x$ the left eigenvalue is unique).
\end{enumerate}
\end{Pro}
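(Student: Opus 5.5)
Both parts reduce to elementary manipulations using right-linearity of $A$ and associativity of quaternion multiplication. No earlier results beyond the conventions of Section~\ref{subsec_quaternion_matrices} should be needed.

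For item~1, I fix $q\in\HH^\times$. Right-linearity of $x\mapsto Ax$ gives $A(xq)=(Ax)q$. Substituting $Ax=\lambda x$ gives $(\lambda x)q$. Componentwise associativity in $\HH$ then gives $(\lambda x_k)q=\lambda(x_k q)$, hence $(\lambda x)q=\lambda(xq)$. Finally, $xq\neq 0$, since $q$ is invertible and $x\neq0$; otherwise $x=(xq)q^{-1}=0$. So $xq$ is again a left eigenvector for the same $\lambda$.

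For item~2, I subtract the two relations to obtain $(\lambda-\mu)x=0$ componentwise, i.e. $(\lambda-\mu)x_k=0$ for every $k$. Since $x\neq0$, I pick an index $k$ with $x_k\neq0$. Because $\HH$ is a division ring, I right-multiply by $x_k^{-1}$ and get $\lambda-\mu=0$.

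The only point needing care, which I would flag as the mild obstacle, is keeping the two different multiplications straight. The module action is on the right and $A$ commutes with it, while the eigenvalue acts on the left componentwise. The proof must use associativity $(\lambda x)q=\lambda(xq)$ rather than any commutativity. Relatedly, the division in item~2 must be done on the right by $x_k^{-1}$, not by moving $\lambda$ past $x_k$. Alternatively, one could argue via Theorem~\ref{th_Lq_properties}: $L(\lambda-\mu)\rvec(x_k)=0$ and $L(p)$ is invertible for $p\neq0$, because $L(p)L(\bar p)=|p|^2I_4$. The direct division argument is simpler.
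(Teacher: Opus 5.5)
Your proof is correct and follows the paper's argument essentially verbatim: right-linearity of $A$ plus associativity gives $A(xq)=(Ax)q=(\lambda x)q=\lambda(xq)$, and right-cancelling a nonzero component $x_k$ in $(\lambda-\mu)x_k=0$ gives $\lambda=\mu$. Your added check that $xq\neq 0$ is a detail the paper leaves implicit.
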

\begin{proof}
(1) $A(xq)=(Ax)q=(\lambda x)q=\lambda(xq)$.
(2) If $Ax=\lambda x=\mu x$, then $(\lambda-\mu)x=0$; choose $k$ with $x_k\neq 0$ and right-cancel $x_k$ in $\HH$.
\end{proof}
Consequently, Newton-type methods impose a normalization (gauge) constraint on $x$ to obtain a square Jacobian system (Section~\ref{sec_newton_left_eigs}).

\begin{Pro}[Compactness of the left spectrum]
\label{pro_left_spectrum_compact}
For every $A\in\HH^{n\times n}$, the set $\sigma_\ell(A)$ is compact in $\HH\simeq\RR^4$.
\end{Pro}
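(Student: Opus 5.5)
The plan is to show that $\sigma_\ell(A)$ is bounded and closed in $\HH\simeq\RR^4$, and then invoke Heine--Borel.

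\emph{Boundedness.} Suppose $Ax=\lambda x$ with $x\neq 0$. Then $\|\lambda x\|_2 = |\lambda|\,\|x\|_2$, because $|\lambda x_k|=|\lambda|\,|x_k|$ componentwise by multiplicativity of the quaternion norm. We also have $\|Ax\|_2 = \|\rho(A)\rvec(x)\|_2 \le \|\rho(A)\|_2\,\|x\|_2$, using Theorem~\ref{th_rho_properties} and the fact that $\rvec$ is an isometry between $\HH^n$ with $\|\cdot\|_2$ and $\RR^{4n}$ with the Euclidean norm. Dividing by $\|x\|_2>0$ gives
\[
|\lambda|\le\|\rho(A)\|_2 \quad\text{for every } \lambda\in\sigma_\ell(A).
\]

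\emph{Closedness.} I would give a sequential argument. Take $\lambda_j\in\sigma_\ell(A)$ with $\lambda_j\to\lambda$. By right-scaling invariance (Proposition~\ref{pro_gauge_freedom}, item~1, with a real positive $q$), we may pick eigenvectors $x_j$ normalized so that $\|x_j\|_2=1$. The unit sphere in $\HH^n\cong\RR^{4n}$ is compact, so a subsequence satisfies $x_j\to x$ with $\|x\|_2=1$; in particular $x\neq 0$. The map $(\mu,y)\mapsto Ay-\mu y$ is continuous, since it is real-bilinear in $(\mu,y)$ plus a linear term. Passing to the limit in $Ax_j-\lambda_j x_j=0$ therefore gives $Ax=\lambda x$, so $\lambda\in\sigma_\ell(A)$.

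An equivalent route to closedness uses Theorem~\ref{th_Lq_properties} and Definition~\ref{def_rho}: $\lambda x$ has real representation $(I_n\otimes L(\lambda))\rvec(x)$, so $\lambda\in\sigma_\ell(A)$ if and only if $\det\bigl(\rho(A)-I_n\otimes L(\lambda)\bigr)=0$. This exhibits $\sigma_\ell(A)$ as the zero set of a polynomial in the four real coordinates of $\lambda$, hence closed. I would mention this only as a remark, since the sequential argument is self-contained.

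I do not expect a real obstacle. The only point requiring care is the normalization step: the limit eigenvector must be nonzero, and this is exactly why we rescale to unit norm before extracting a convergent subsequence.
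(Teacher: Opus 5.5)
Your proof is correct. The paper gives no argument of its own here; it defers to Proposition~16 of \cite{MaciasVirgosPereiraSaez2014}, i.e., to the topological framework of that reference.

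Your version is self-contained and uses only tools the paper already sets up. For boundedness, the isometry $\rvec$ together with Theorem~\ref{th_rho_properties} gives the explicit bound $|\lambda|\le\|\rho(A)\|_2$, which equals the operator $2$-norm of $A$ on $\HH^n$. For closedness, you normalize eigenvectors by a positive real right scaling (Proposition~\ref{pro_gauge_freedom}) and use compactness of the unit sphere in $\RR^{4n}$. The normalization is exactly what guarantees that the limit eigenvector is nonzero.

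Your determinant remark is the ``preimage of a closed set'' route made concrete. Within the paper, the most economical form of it is Proposition~\ref{pro:res_basic}(3), which exhibits $\sigma_\ell(A)$ as the zero set of the continuous function $\lambda\mapsto\sigma_{\min}\bigl(\rho(A-\lambda I)\bigr)$.

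Beyond replacing a citation with a proof, your approach supplies the explicit radius $\|\rho(A)\|_2$. The paper never states it, but it is a natural a priori region for seeding the multi-start Newton trials and for sanity-checking computed candidates.
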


\begin{proof}
See Proposition~16 in \cite{MaciasVirgosPereiraSaez2014}.
\end{proof}

\begin{Pro}[Real shifts and real scaling]
\label{pro_left_real_shift_scale}
Let $A\in\HH^{n\times n}$ and $\alpha,\beta\in\RR$ with $\beta\neq 0$.
Then
\[
\sigma_\ell(A+\alpha I)=\sigma_\ell(A)+\alpha,
\qquad
\sigma_\ell(\beta A)=\beta\,\sigma_\ell(A).
\]
\end{Pro}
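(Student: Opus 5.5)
The plan is to prove both identities by showing that real scalars commute with everything in $\HH$. As a result, they pass through left multiplication by $\lambda$ and commute with the matrix action.

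\emph{Shift.} Fix $\lambda\in\HH$ and $x\in\HH^n\setminus\{0\}$. Since $\alpha$ is real, $(A+\alpha I)x = Ax+\alpha x$. Componentwise left multiplication is distributive, so
\[
(A+\alpha I)x=(\lambda+\alpha)x \iff Ax+\alpha x=\lambda x+\alpha x \iff Ax=\lambda x .
\]
Hence $\mu\in\sigma_\ell(A+\alpha I)$ if and only if $\mu-\alpha\in\sigma_\ell(A)$, with the same eigenvector $x$. This is exactly $\sigma_\ell(A+\alpha I)=\sigma_\ell(A)+\alpha$. Both inclusions come out of the single equivalence, since the map $\lambda\mapsto\lambda+\alpha$ is a bijection of $\HH$.

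\emph{Scaling.} For $\beta\in\RR\setminus\{0\}$ we have $(\beta A)x=\beta(Ax)$, because $\beta$ is central in $\HH$ and so $(\beta a_{rs})x_s=\beta(a_{rs}x_s)$. Likewise $(\beta\lambda)x=\beta(\lambda x)$ by associativity. Therefore
\[
(\beta A)x=(\beta\lambda)x \iff \beta(Ax-\lambda x)=0 \iff Ax=\lambda x,
\]
where the last step multiplies by $\beta^{-1}$, using $\beta\neq0$. Since $\lambda\mapsto\beta\lambda$ is a bijection of $\HH$, we get $\mu\in\sigma_\ell(\beta A)$ if and only if $\beta^{-1}\mu\in\sigma_\ell(A)$. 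This gives $\sigma_\ell(\beta A)=\beta\,\sigma_\ell(A)$.

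The only point needing care is the noncommutativity bookkeeping: the argument must use that $\alpha,\beta$ are real, so that they lie in the center of $\HH$. The same manipulation fails for a general quaternion shift $qI$ or scaling $qA$, because $(qa)x\neq a(qx)$ in general. Nothing beyond Definition~\ref{def_left_eigs} is needed, and there is no genuine obstacle.
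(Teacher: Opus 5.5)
Your proof is correct and follows the paper's argument. The paper verifies $Ax=\lambda x\Rightarrow(A+\alpha I)x=(\lambda+\alpha)x$ and $(\beta A)x=(\beta\lambda)x$ and says the reverse implications are identical, which is exactly what your chains of equivalences record.

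One correction to your closing remark: centrality is never actually used. The paper also cites it, but neither argument needs it. The identities $(A+qI)x=Ax+qx$ and $(q\,a_{rs})x_s=q\,(a_{rs}x_s)$ hold for every $q\in\HH$ by distributivity and associativity alone. Consequently $\sigma_\ell(A+qI)=\sigma_\ell(A)+q$ for all $q\in\HH$, and $\sigma_\ell(qA)=q\,\sigma_\ell(A)$ for all $q\neq0$. The manipulation therefore does not fail for quaternionic shifts or left scalings, since the swap $(qa)x=a(qx)$ you worry about is never needed. That kind of obstruction arises for right eigenvalues, where $qy\neq yq$.
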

\begin{proof}
If $Ax=\lambda x$, then $(A+\alpha I)x=(\lambda+\alpha)x$ and $(\beta A)x=(\beta\lambda)x$ because $\alpha,\beta\in\RR$ are central.
The reverse implications are identical.
\end{proof}

\begin{Rem}[Lack of similarity invariance for $\sigma_\ell(A)$]
\label{rem_left_not_similarity_invariant}
Unlike the right spectrum, the left spectrum $\sigma_\ell$ is not invariant under quaternionic similarity in general.
(However, it \emph{is} invariant under \emph{real} similarity, i.e., $A\mapsto S^{-1}AS$ with $S\in\RR^{n\times n}$ invertible.)
See \cite{MaciasVirgosPereiraSaez2014,DeLeoScolariciSolombrino2002}.
\end{Rem}

In structured situations, $\sigma_\ell(A)$ can be read off immediately; this is useful for sanity checks and for
interpreting numerical output.

\begin{Pro}[Diagonal and triangular matrices]
\label{pro_diag_triangular_left}
Let $A\in\HH^{n\times n}$.
\begin{enumerate}
\item If $A=\diag(q_1,\dots,q_n)$ is diagonal, then $\sigma_\ell(A)=\{q_1,\dots,q_n\}$.
\item If $A$ is upper triangular with diagonal entries $a_{11},\dots,a_{nn}$, then $\sigma_\ell(A)\subset\{a_{11},\dots,a_{nn}\}$.
\end{enumerate}
\end{Pro}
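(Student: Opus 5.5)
For part (1), we verify both inclusions directly. For $A=\diag(q_1,\dots,q_n)$, the unit vector $e_k$ gives $Ae_k = e_k q_k$. Since $e_k$ has a single nonzero entry equal to $1$, we also have $e_k q_k = q_k e_k$, so every $q_k$ is a left eigenvalue; this is the step that uses left multiplication in the definition of $\sigma_\ell$. Conversely, suppose $Ax=\lambda x$ with $x\neq 0$. Reading the equation componentwise gives $q_k x_k=\lambda x_k$ for each $k$. Choosing an index $k$ with $x_k\neq0$ and right-cancelling $x_k$ in the skew field $\HH$, exactly as in part (2) of Proposition~\ref{pro_gauge_freedom}, yields $\lambda=q_k$. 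Hence $\sigma_\ell(A)=\{q_1,\dots,q_n\}$.

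For part (2), let $A$ be upper triangular and $Ax=\lambda x$ with $x\neq0$. Let $k$ be the largest index with $x_k\neq0$, so that $x_{k+1}=\dots=x_n=0$. Row $k$ of the system reads
\[
\sum_{s\ge k} a_{ks}x_s = a_{kk}x_k = \lambda x_k,
\]
because $a_{ks}=0$ for $s<k$ and $x_s=0$ for $s>k$. Right-cancelling $x_k\neq0$ gives $\lambda=a_{kk}$, and therefore $\sigma_\ell(A)\subset\{a_{11},\dots,a_{nn}\}$.

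The only delicate point is the choice of index in part (2): we must take the \emph{last} nonzero component, not the first, so that row $k$ involves $x_k$ alone. We must also cancel on the right, which is legitimate because $(a_{kk}-\lambda)x_k=0$ in a division ring forces $a_{kk}=\lambda$. We do not claim equality in (2). For $n=2$ one might try to realize $a_{22}$ by solving $(a_{11}-a_{22})x_1 + a_{12}x_2 = (\lambda - a_{22})x_1$ with $\lambda=a_{22}$, that is, $a_{11}x_1-a_{22}x_1=-a_{12}x_2$. This is a Sylvester-type equation $a_{11}x_1-x_1'$ that need not be solvable, because of noncommutativity. This is why the statement asserts only an inclusion, and the proof requires nothing beyond the argument above.
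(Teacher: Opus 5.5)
Your proofs of both parts are correct. The paper gives no argument of its own here; it only cites Example~21 of \cite{MaciasVirgosPereiraSaez2014} and \cite{Zhang1997,Rodman2014}. Your componentwise argument is the elementary, self-contained proof behind those references: read off the row of a suitable nonzero entry of $x$ and right-cancel it, taking the \emph{last} nonzero entry in the triangular case.

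Your closing paragraph, however, is mathematically wrong and should be deleted. With $\lambda=a_{22}$ the first row reads $a_{11}x_1+a_{12}x_2=a_{22}x_1$, i.e.\ $(a_{11}-a_{22})x_1=-a_{12}x_2$. A \emph{left} eigenvalue multiplies $x_1$ from the left, exactly as $a_{11}$ does, so the unknown appears on one side only and this is not a Sylvester equation. If $a_{11}\neq a_{22}$, take $x_2=1$ and $x_1=-(a_{11}-a_{22})^{-1}a_{12}$; if $a_{11}=a_{22}$, take $x=e_1$. The Sylvester-type equation $a_{11}x_1-x_1a_{22}=-a_{12}$ belongs to the \emph{right} eigenvalue problem $Ax=x\mu$, not to the left one.

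In fact equality holds in (2) for every $n$. The matrix $A-a_{kk}I$ is upper triangular with a zero $(k,k)$ entry, so its first $k$ columns lie in the right span of $e_1,\dots,e_{k-1}$ and are therefore right linearly dependent. A nontrivial dependency gives $x\neq0$ with $(A-a_{kk}I)x=0$, i.e.\ $Ax=a_{kk}x$, since $(\lambda I)x=\lambda x$ componentwise. The paper itself relies on this equality: its upper-triangular benchmark family is described as a sanity check precisely because the diagonal entries must be left eigenvalues. The inclusion in the statement is therefore just a weaker-than-necessary formulation, not a sign of a noncommutative obstruction.
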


\begin{proof}
See, e.g., Example~21 in \cite{MaciasVirgosPereiraSaez2014}; also \cite{Zhang1997,Rodman2014}.
\end{proof}

\subsection{Spectrum geometry and cardinality}
\label{subsec_left_2x2_and_beyond}
For $2\times2$ matrices, the geometry of $\sigma_\ell(A)$ is remarkably rigid: Huang and So proved that $\sigma_\ell(A)$ has either one element, two elements, or infinitely many elements; in the infinite case, it is a $2$-sphere in $\HH$ (an affine translate of a sphere in the purely imaginary subspace). See \cite{HuangSo2001} (and refinements for special families in \cite{MaciasVirgosPereiraSaez2009}).

For $3\times3$ matrices, characteristic-map / Cayley--Hamilton techniques yield explicit maps whose zeros coincide with $\sigma_\ell(A)$ in the studied settings \cite{MaciasVirgosPereiraSaez2014b,MaciasVirgosPereiraSaez2014}.
These tools establish existence and enable the analysis of concrete examples, but they do not provide a complete, size-independent classification of the possible geometries or cardinalities (finite vs.\ continuous components, sharp
upper bounds on the number of isolated points, or a general description of mixed configurations).

For larger $n$, the global structure of $\sigma_\ell(A)$ is far less understood \cite{MaciasVirgosPereiraSaez2014,Rodman2014}. The numerical evidence reported below indicates phenomena beyond the classical low-dimensional picture: many isolated left eigenvalues, mixed ``spherical + isolated'' configurations, and, in some cases, multiple spherical components. This further motivates scalable, direct computational methods.

\subsection{The zero eigenvalue: singularity and multiplicity}
\label{subsec_zero_left_eig_mult}
The eigenvalue $\lambda=0$ is special: it is both a left and a right eigenvalue whenever it occurs, and it encodes
singularity in the usual way.

\begin{Pro}[Zero eigenvalue and singularity]
\label{pro_zero_left_right_singularity}
Let $A\in\HH^{n\times n}$. Then the following are equivalent:
\[
0\in\sigma_\ell(A),\qquad 0\in\sigma_\err(A),\qquad \ker(A)\neq\{0\},\qquad A\ \text{is singular}.
\]
\end{Pro}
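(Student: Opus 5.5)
The plan is to prove the four equivalences as a chain, each link following directly from the definitions and the earlier structural results. For $\sigma_\err(A)$ I use the standard definition: $\lambda\in\sigma_\err(A)$ if $Ax=x\lambda$ for some $x\neq 0$. This is the natural right counterpart of Definition~\ref{def_left_eigs}, even though it has not been formally restated here.

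\textbf{First link: $0\in\sigma_\ell(A)\iff\ker(A)\neq\{0\}$.} By Definition~\ref{def_left_eigs}, $0\in\sigma_\ell(A)$ means there is $x\neq0$ with $Ax=0\cdot x=0$. This says exactly that $\ker(A)\neq\{0\}$.

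\textbf{Second link: $0\in\sigma_\err(A)\iff\ker(A)\neq\{0\}$.} The same argument applies, since $x\cdot 0=0$ as well. For $\lambda=0$ the left and right eigen-equations literally coincide, so no noncommutativity issue arises.

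\textbf{Third link: $\ker(A)\neq\{0\}\iff A$ is singular.} This is the only step with content. I interpret ``singular'' as ``not invertible over $\HH$.'' By Theorem~\ref{th_rho_properties}, $A$ is invertible if and only if $\rho(A)$ is invertible over $\RR$. Over $\RR$, $\rho(A)$ is invertible if and only if $\ker(\rho(A))=\{0\}$. By Theorem~\ref{th_rho_kernel_rank}, $\dim\ker(\rho(A))=4\dim\ker(A)$, which vanishes exactly when $\ker(A)=\{0\}$.

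An alternative route avoids the embedding. By Proposition~\ref{pro_rank_nullity}, $\ker(A)=\{0\}$ is equivalent to full column right rank $r=n$. Full rank in turn gives a bijective right-linear map, and hence an inverse matrix.

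The main obstacle, if any, is only the definitional point of what ``singular'' means over $\HH$. If it is read as the existence of a two-sided inverse, one must know that a one-sided inverse suffices. The route through $\rho$ sidesteps this: a real square matrix with trivial kernel has a two-sided inverse $M$, and $M$ lies in $\rho(\HH^{n\times n})$ because $M=\rho(A)^{-1}$ is a polynomial in $\rho(A)$ by Cayley--Hamilton. Alternatively, one simply invokes the ``in particular'' clause of Theorem~\ref{th_rho_properties} as stated, which already settles the point.
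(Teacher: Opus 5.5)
Your proof is correct and follows the same chain as the paper: $0\in\sigma_\ell(A)$ iff $Ax=0$ has a nonzero solution iff $\ker(A)\neq\{0\}$ iff $A$ is singular, with the right-eigenvalue case identical since $x\cdot 0=0\cdot x$. The only difference is that you justify the last link explicitly (via $\rho$ and Theorem~\ref{th_rho_kernel_rank}, or via rank--nullity), whereas the paper treats it as immediate.
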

\begin{proof}
$0\in\sigma_\ell(A)$ iff $(A-0\cdot I)x=Ax=0$ for some $x\neq0$ iff $\ker(A)\neq\{0\}$ iff $A$ is singular.
The right-eigenvalue statement is identical.
\end{proof}

\begin{Cor}[Geometric multiplicity of the zero eigenvalue]
\label{cor_zero_left_geom_mult}
Let $A\in\HH^{n\times n}$ and let $m:=\dim\ker(A)=n-\rank_{\mathrm{col,right}}(A)$.
Then $0\in\sigma_\ell(A)$ if and only if $m>0$, and the geometric multiplicity of the left eigenvalue $0$ equals $m$.
Moreover, with the real embedding $\rho(\cdot)$,
$\dim\ker\bigl(\rho(A)\bigr)=4m$.
\end{Cor}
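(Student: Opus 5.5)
The corollary follows almost entirely from Proposition~\ref{pro_zero_left_right_singularity}, Proposition~\ref{pro_rank_nullity}, and Theorem~\ref{th_rho_kernel_rank}. The only point that needs care is what "geometric multiplicity" means here, since the paper deliberately avoids multiplicities for general left eigenvalues.

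For the first claim, I would invoke Proposition~\ref{pro_zero_left_right_singularity}: $0\in\sigma_\ell(A)$ if and only if $\ker(A)\neq\{0\}$. Since $\ker(A)$ is a right $\HH$-submodule of $\HH^n$, and hence a free right $\HH$-module of some dimension, it is nonzero exactly when its dimension is positive. Proposition~\ref{pro_rank_nullity} identifies this dimension as $m=n-\rank_{\mathrm{col,right}}(A)$, so $0\in\sigma_\ell(A)\iff m>0$.

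For the multiplicity claim, the key observation is that the eigenvector set for $\lambda=0$ is $\{x\neq 0:\ Ax=0\cdot x\}=\ker(A)\setminus\{0\}$. Because $0$ is central, $\lambda x=x\lambda$ for $\lambda=0$, so the solution set together with $0$ is closed under addition and right scalar multiplication. It is therefore a genuine right subspace, namely $\ker(A)$. For a general $\lambda$ the set $\{x: Ax=\lambda x\}$ is closed under right scaling only up to conjugation issues, which is why multiplicity is not defined in general. I would therefore define the geometric multiplicity of the left eigenvalue $0$ as the quaternionic dimension of this eigenspace, which is $\dim\ker(A)=m$ by Proposition~\ref{pro_rank_nullity}.

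Finally, $\dim\ker(\rho(A))=4m$ is exactly the statement of Theorem~\ref{th_rho_kernel_rank}, obtained via $\rvec(\ker A)=\ker\rho(A)$.

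There is no real technical obstacle. The main step to get right is the multiplicity claim: one must justify that the eigenvector set for $\lambda=0$ is a right subspace, which relies on the centrality of $0$, so that "geometric multiplicity" is meaningful here despite the paper's earlier remark that it does not use multiplicities for left eigenvalues.
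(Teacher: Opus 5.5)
Your argument is correct and follows the paper's route. The paper gives no separate proof; it treats the corollary as an immediate consequence of Proposition~\ref{pro_zero_left_right_singularity}, Proposition~\ref{pro_rank_nullity} and Theorem~\ref{th_rho_kernel_rank}, which are exactly the three results you combine.

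One side remark in your proposal is false. The set $\{x:\ Ax=\lambda x\}=\ker(A-\lambda I)$ is a right $\HH$-subspace for \emph{every} $\lambda\in\HH$. It is closed under addition, and under right scaling because $A(xq)=(Ax)q=(\lambda x)q=\lambda(xq)$ by associativity; this is Proposition~\ref{pro_gauge_freedom}(1). The paper itself uses geometric multiplicity one for a general $\lambda_\star$ in Definition~\ref{def_simple_gauged_left_eig}.

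So the centrality of $0$ plays no role, and the ``conjugation issue'' you mention belongs to right eigenvalues $Ay=y\mu$. For $\lambda=0$, identifying the eigenspace with $\ker(A)$ needs only $0\cdot x=0$. This step is not the delicate point you describe, though your conclusion there is still correct.
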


\begin{Rem}[Algebraic multiplicity at $\lambda=0$]
\label{rem_zero_alg_mult}
The quantity $m=\dim\ker(A)$ is the geometric multiplicity of the eigenvalue $0$.
As in the complex case, the algebraic multiplicity can be larger when generalized null chains exist.
In this paper, we only use the geometric multiplicity, which is directly accessible via $\ker(A)$ (or via $\rho(A)$).
\end{Rem}

\subsection{Algebraic reductions, and computational approaches}
\label{subsec_left_spectrum_computation}
A basic characterization is $\lambda\in\sigma_\ell(A)\Longleftrightarrow
A-\lambda I$  is singular over $\HH\Longleftrightarrow\rho(A-\lambda I)$ is singular over $\RR,$ where $\rho(\cdot)$ is the real embedding from Section~\ref{subsec_real_embedding}.
However, this does \emph{not} reduce the problem to an ordinary eigenvalue computation for the fixed real matrix $\rho(A)$,
because $\lambda$ does not enter as a scalar shift:
\[
\rho(A-\lambda I)=\rho(A)-I_n\otimes L(\lambda),
\]
so the spectral parameter appears blockwise through the $4\times4$ left-multipli\-cation matrix $L(\lambda)$ (unless $\lambda\in\RR$, when $L(\lambda)=\lambda I_4$). Consequently, determinantal elimination leads to a real-algebraic constraint in the four real components of $\lambda$, but expanding it quickly becomes impractical as $n$ grows.
\begin{Ex}[Determinantal elimination is correct but quickly becomes unwieldy]
\label{ex_det_elimination_awkward}
Let $A=[a_{rs}]\in\HH^{n\times n}$ and write
$\lambda=\lambda_r+\lambda_i\,\i+\lambda_j\,\j+\lambda_k\,\k\in\HH.$
When eliminating the eigenvector as shown above, we arrive at the single real equation
\begin{equation}
\label{eq_det_elimination_big}
P(\lambda_r,\lambda_i,\lambda_j,\lambda_k)
:=\det\!\bigl(\rho(A-\lambda I)\bigr)
=\det\!\Bigl(\rho(A)-I_n\otimes L(\lambda)\Bigr)=0,
\end{equation}
where
\[
\rho(A)=\bigl[L(a_{rs})\bigr]_{r,s=1}^n\in\RR^{4n\times 4n},\quad
L(\lambda)=
\begin{bmatrix}
 \lambda_r & -\lambda_i & -\lambda_j & -\lambda_k\\
 \lambda_i &  \lambda_r & -\lambda_k &  \lambda_j\\
 \lambda_j &  \lambda_k &  \lambda_r & -\lambda_i\\
 \lambda_k & -\lambda_j &  \lambda_i &  \lambda_r
\end{bmatrix}\in\RR^{4\times 4}.
\]
Equivalently, \eqref{eq_det_elimination_big} is the (block) determinantal condition
\[
\det\!\begin{bmatrix}
L(a_{11})-L(\lambda) & L(a_{12})          & \cdots & L(a_{1n})\\
L(a_{21})            & L(a_{22})-L(\lambda)& \cdots & L(a_{2n})\\
\vdots               & \vdots             & \ddots & \vdots\\
L(a_{n1})            & L(a_{n2})           & \cdots & L(a_{nn})-L(\lambda)
\end{bmatrix}=0.
\]
Since every entry of $\rho(A)-I_n\otimes L(\lambda)$ depends \emph{affinely} on
$(\lambda_r,\lambda_i,\lambda_j,\lambda_k)$, the polynomial $P$ has total degree at most $4n$.
In practice, however, expanding or globally solving \eqref{eq_det_elimination_big} rapidly becomes impractical as $n$ grows,
and the zero set may include continua (e.g., spherical components), motivating direct solvers for the coupled system $Ax-\lambda x=0$ under a gauge constraint (Section~\ref{sec_newton_left_eigs}).
\end{Ex}
To avoid working directly with the determinantal condition \eqref{eq_det_elimination_big}---whose explicit expansion
rapidly becomes prohibitive---several authors instead derive \emph{polynomial systems} in the four real unknowns
$(\lambda_r,\lambda_i,\lambda_j,\lambda_k)$ by rewriting $(A-\lambda I)x=0$ in real coordinates and eliminating $x$ in a more structured way; see, e.g., \cite{MaciasVirgosPereiraSaez2014,LiuKou2019}. Such reductions can be effective
for very small sizes, but the computational bottleneck is then a \emph{global} solution of a nonlinear polynomial system (often of high degree, with spurious/complex solutions and increasing numerical fragility as $n$ grows).
In contrast, our Newton formulation keeps the eigenvector $x$ and replaces a single global algebraic elimination problem by a sequence of well-conditioned real linear solves inside a local iteration, combined with multiple initializations to explore $\sigma_\ell(A)$.

In low dimensions and special matrix classes, \emph{characteristic maps} can sometimes avoid a full global solve and enable explicit analysis; notably, there is a complete $2\times2$ classification \cite{HuangSo2001}, and several $3\times3$ settings admit characteristic-function constructions \cite{MaciasVirgosPereiraSaez2014b,MaciasVirgosPereiraSaez2014}.
Closed-form cases such as Proposition~\ref{pro_diag_triangular_left} apply for arbitrary $n$ and are useful as sanity checks.

\subsection{Residual certificates for left eigenvalues}
\label{subsec:residual_certificates}
We use an eigenpair residual (for a specific vector) and an eigenvalue-only residual obtained by minimizing over unit vectors. Both are measured in the Euclidean norm on $\HH^n$ (equivalently, on $\RR^{4n}$ under $\rvec(\cdot)$).

\begin{Def}[Eigenpair residual]
\label{def:res_eigenpair}
Let $A\in\HH^{n\times n}$, $\lambda\in\HH$, and $v\in\HH^n$ with $\|v\|_2=1$. Define
\[
\mathrm{res}(A,\lambda,v) := \|Av-\lambda v\|_2.
\]
\end{Def}

\begin{Def}[Minimal (eigenvalue-only) residual]
\label{def:res_min}
Let $A\in\HH^{n\times n}$ and $\lambda\in\HH$. Define
\[
\mathrm{res}_{\min}(A,\lambda)
:=\min_{\|x\|_2=1}\|Ax-\lambda x\|_2.
\]
\end{Def}
We use these scalar residual quantities as numerical \emph{certificates}; in contrast, the vector quantity $Ax-\lambda x\in\HH^n$ is referred to as the eigenpair \emph{defect}.

\begin{Pro}[Basic relations and singularity test]
\label{pro:res_basic}
Let $A\in\HH^{n\times n}$ and $\lambda\in\HH$. Then:
\begin{enumerate}
\item For any unit vector $v\in\HH^n$, $\ \mathrm{res}_{\min}(A,\lambda)\le \mathrm{res}(A,\lambda,v)$.
\item $\ \lambda\in\sigma_\ell(A)\ \Longleftrightarrow\ \mathrm{res}_{\min}(A,\lambda)=0
\ \Longleftrightarrow\ A-\lambda I \text{ is singular over }\HH.$
\item With the real embedding $\rho(\cdot)$ from Section~\ref{subsec_real_embedding},
\[
\mathrm{res}_{\min}(A,\lambda)=\sigma_{\min}\bigl(\rho(A-\lambda I)\bigr),
\]
where $\sigma_{\min}(\cdot)$ is the smallest singular value of $\rho(A-\lambda I)\in\RR^{4n\times 4n}$.
\end{enumerate}
\end{Pro}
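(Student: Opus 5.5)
The plan is to prove item (3) first and then read off (1) and (2) from it together with the definitions. Two facts carry the whole argument. First, $\rvec$ is an isometry from $\HH^n$ with $\|\cdot\|_2$ onto $\RR^{4n}$ with the Euclidean norm. This holds because $\|x\|_2^2=x^\ast x=\sum_k \bar x_k x_k=\sum_k |x_k|^2$, and $|x_k|^2=a_k^2+b_k^2+c_k^2+d_k^2$, which is exactly the sum of squares of the corresponding entries of $\rvec(x)$. Second, Theorem~\ref{th_rho_properties} gives $\rvec\bigl((A-\lambda I)x\bigr)=\rho(A-\lambda I)\,\rvec(x)$. Here $A-\lambda I$ means the quaternion matrix with diagonal entries $a_{kk}-\lambda$, and $(A-\lambda I)x=Ax-\lambda x$ holds entrywise because $\lambda$ multiplies each $x_k$ from the left.

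For item (3), combine these two facts. We get $\|Ax-\lambda x\|_2=\|\rho(A-\lambda I)\,y\|_2$ with $y=\rvec(x)$. As $x$ ranges over the unit sphere of $\HH^n$, $y$ ranges bijectively over the unit sphere of $\RR^{4n}$. Hence
\[
\mathrm{res}_{\min}(A,\lambda)=\min_{\|y\|_2=1}\|\rho(A-\lambda I)y\|_2=\sigma_{\min}\bigl(\rho(A-\lambda I)\bigr),
\]
where the last equality is the Courant--Fischer characterization of the smallest singular value of a square real matrix. The same computation shows that the minimum in Definition~\ref{def:res_min} is attained, since the unit sphere is compact and the norm is continuous. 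So $\mathrm{res}_{\min}$ is well defined.

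Item (1) is then immediate: $v$ is one admissible unit vector in the minimization. For item (2), the equality $\mathrm{res}_{\min}(A,\lambda)=0$ holds if and only if some unit $x$ satisfies $Ax=\lambda x$, because the minimum is attained. Normalizing any nonzero eigenvector by a real positive scalar, which is harmless by Proposition~\ref{pro_gauge_freedom}, this is equivalent to $\lambda\in\sigma_\ell(A)$. The same condition says $\ker(A-\lambda I)\neq\{0\}$, i.e.\ $A-\lambda I$ is singular over $\HH$. Equivalently, by Theorem~\ref{th_rho_properties}, $\rho(A-\lambda I)$ is singular, which matches $\sigma_{\min}=0$.

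I expect no real obstacle. The only points needing care are the isometry of $\rvec$, which is a short calculation, and checking that $\lambda x$ with $\lambda$ on the left is exactly $(\lambda I)x$. The latter is what justifies using the embedding $\rho(A-\lambda I)=\rho(A)-I_n\otimes L(\lambda)$ rather than a right-multiplication representation.
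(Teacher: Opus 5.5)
Your proposal is correct and follows essentially the same route as the paper. For (3), you use that $\rvec$ is an isometry intertwining $x\mapsto(A-\lambda I)x$ with $\rho(A-\lambda I)$; (1) and (2) then follow from the definition of the minimum, and you additionally spell out two points the paper leaves implicit: that the minimum is attained (needed for the ``$\mathrm{res}_{\min}=0$'' direction of (2)), and that a nontrivial kernel is equivalent to non-invertibility via $\rho$.
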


\begin{proof}
(1) is immediate from the minimum definition. (2) holds because $\mathrm{res}_{\min}(A,\lambda)=0$ iff
$(A-\lambda I)x=0$ has a nontrivial solution. For (3), $\rho(A-\lambda I)$ represents the real-linear operator
$x\mapsto (A-\lambda I)x$ and $\rvec$ is an isometry between $\HH^n$ (with $\|\cdot\|_2$) and $\RR^{4n}$ (Euclidean norm),
so the minimum over $\|x\|_2=1$ equals the smallest singular value.
\end{proof}

In numerical experiments we primarily report $\mathrm{res}(A,\lambda_k,v_k)$ for accepted eigenpairs
(\ref{app:reported_metrics}). When a vector-free check is needed, $\mathrm{res}_{\min}(A,\lambda)$ can be evaluated via
$\sigma_{\min}\bigl(\rho(A-\lambda I)\bigr)$.

\subsection{Contrast with right eigenvalues}
\label{subsec_left_right_contrast}
For context only (we do not compute right eigenvalues here), $\mu\in\HH$ is a \emph{right eigenvalue} of $A$
if $Ay=y\mu$ for some $y\in\HH^n\setminus\{0\}$; the corresponding set is $\sigma_\err(A)$. A key qualitative difference from left eigenvalues is the effect of right-scaling the eigenvector: if $Ay=y\mu$ and $q\in\HH^\times$, then
\[
A(yq)=y(\mu q)=(yq)(q^{-1}\mu q).
\]
Thus, the same right-eigenspace can be represented by different quaternions related by
\[
\mu' = q^{-1}\mu q,
\]
a relation often called \emph{similarity} (or conjugacy) in $\HH$. The set
\[
[\mu]:=\{\,s^{-1}\mu s:\ s\in\HH^\times\,\}
\]
is the \emph{similarity class} of $\mu$; it is a single point if $\mu\in\RR$, and otherwise a $2$-sphere determined by $\Re(\mu)$ and $|\Im(\mu)|$ \cite{Zhang1997,Rodman2014}. Accordingly, right eigenvalues are naturally defined only up to similarity, and $\sigma_\err(A)$ is similarity-invariant; it is commonly studied via the complex adjoint map $\chi(\cdot)$
(\ref{app:complex_embedding}) \cite{Zhang1997,Zhang2007,Rodman2014}.

By contrast, left eigenvalues satisfy $Ax=\lambda x$ and are unchanged under right scaling of eigenvectors (Proposition~\ref{pro_gauge_freedom}), but $\sigma_\ell(A)$ is generally \emph{not} invariant under quaternionic similarity
(Rem.~\ref{rem_left_not_similarity_invariant}). This asymmetry is one of the reasons why methods tailored to $\sigma_\err(A)$ do not directly carry over to computing $\sigma_\ell(A)$.

As a minimal sanity check, for $A=[q]\in\HH^{1\times 1}$ one has $\sigma_\ell(A)=\{q\}$, whereas
$\sigma_\err(A)=[q]$ is the full similarity class of $q$.
\label{ex_left_right_1x1}

\section{Newton iteration for left eigenpairs with a gauge constraint}
\label{sec_newton_left_eigs}
Let $A\in\HH^{n\times n}$. The main contribution of this paper is a Newton-type method for computing left eigenpairs
$(\lambda,x)\in\HH\times(\HH^n\setminus\{0\})$ by solving the defect equation
\begin{equation}
\label{eq_left_eig}
Ax-\lambda x = 0.
\end{equation}
By Proposition~\ref{pro_gauge_freedom}, \eqref{eq_left_eig} is invariant under right scaling $x\mapsto xq$.
To obtain a square, smooth system (in real coordinates), we fix this freedom by imposing a gauge constraint and apply Newton's method to the resulting gauged system.

A practical advantage of this formulation is that Newton is applied directly to the \emph{eigenpair defect}
$Ax-\lambda x$ (augmented only by simple gauge constraints), rather than to a determinantal or characteristic-map equation in the spectral parameter alone. This keeps each correction step as a structured linear-algebraic problem in
$(\Delta\lambda,\Delta x)$ (Section~\ref{subsec_newton_real_form}), which in our experience yields more robust local behavior than Newton iterations based on polynomial or characteristic eliminations, even in low-degree cases.

\subsection{Gauge constraint and regauging}
\label{subsec_newton_gauge}
A left eigenvector is not unique: if $Ax=\lambda x$, then $A(xq)=\lambda(xq)$ for any $q\in\HH^\times$ (Proposition~\ref{pro_gauge_freedom}). This right-scaling freedom has four real degrees of freedom (the four real components
of $q$), and without fixing it, the eigenpair equation does not define an isolated root in $(\lambda,x)$.
Consequently, the Jacobian of the raw defect equation $Ax-\lambda x=0$ is typically singular along the scaling orbit.
To obtain a square smooth system in real coordinates and a well-posed Newton correction, we fix the freedom by imposing four real constraints, selecting a \emph{canonical representative} from each orbit $\{xq:\ q\in\HH^\times\}$.

Fix an index $j\in\{1,\dots,n\}$ (a \emph{pivot}). We enforce two conditions:
(i) normalization $\|x\|_2=1$ (one real constraint), and (ii) a \emph{real pivot} $x_j\in\RR$ (three real constraints), with the branch $x_j>0$ chosen to remove the remaining sign ambiguity. This choice is particularly convenient here:
It is constructive (every $x$ with $x_j\neq 0$ can be regauged explicitly), it is numerically stable when $j$ is chosen as an index of a large component of $x$, and its linearization leads to simple constraint rows in the real Newton system (Section~\ref{subsec_newton_real_form}).

The gauge is encoded by a smooth constraint map
\begin{equation}\label{eq0gauge}
g:\HH^n\setminus\{x:\ x_j=0\}\to\RR^4,
\qquad
x\mapsto g(x):=
\begin{bmatrix}
\|x\|_2^2-1\\[0.3mm]
\Im(x_j)
\end{bmatrix}.
\end{equation}
The (smooth) gauge constraint is
\begin{equation}
\label{eq_gauge}
g(x)=0,
\end{equation}
which imposes $\|x\|_2=1$ and $x_j\in\RR$. The additional branch selection $x_j>0$ is enforced by regauging
and is not included in the smooth system.

\begin{Lem}[Regauging to \eqref{eq_gauge}]
\label{lem_regauge}
Let $x\in\HH^n$ and assume $x_j\neq 0$ for some $j$.
Define
\[
q_1:=x_j^{-1}|x_j|\in\HH^\times,\;\; \tilde x:=x q_1,
\;\; q_2:=\|\tilde x\|_2^{-1}\in\RR_{>0},\;\; x^{\mathrm g}:=\tilde x\,q_2.
\]
Then $g(x^{\mathrm g})=0$ and $(x^{\mathrm g})_j\in\RR_{>0}$.
Moreover, if $(\lambda,x)$ satisfies \eqref{eq_left_eig}, then $(\lambda,x^{\mathrm g})$ also satisfies \eqref{eq_left_eig}.
\end{Lem}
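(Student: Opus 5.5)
The plan is to verify the two gauge conditions directly from the definitions, handling the pivot entry first and the normalization second. The eigen-relation is then inherited from Proposition~\ref{pro_gauge_freedom}(1).

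\emph{Step 1 (the pivot becomes real and positive).} Since $x_j\neq 0$, we have $x_j^{-1}=\bar x_j/|x_j|^2$, so $q_1=x_j^{-1}|x_j|$ is well defined and nonzero, with $|q_1|=1$. The $j$th entry of $\tilde x=xq_1$ is
\[
\tilde x_j=x_j q_1=x_j x_j^{-1}|x_j|=|x_j|\in\RR_{>0},
\]
using only associativity in $\HH$ and the fact that the real scalar $|x_j|$ is central. Since $q_2\in\RR_{>0}$ is also central, $(x^{\mathrm g})_j=|x_j|\,q_2>0$. In particular $\Im((x^{\mathrm g})_j)=0$.

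\emph{Step 2 (normalization).} Right multiplication by a unit quaternion preserves each $|x_k|$, because $|x_kq_1|=|x_k||q_1|$. Hence $\|\tilde x\|_2=\|x\|_2>0$, and $q_2$ is well defined. Then $\|x^{\mathrm g}\|_2=q_2\|\tilde x\|_2=1$ by homogeneity of the norm under positive real scalars. Together with Step~1 this gives $g(x^{\mathrm g})=0$. Strictly, the multiplicativity $|pq|=|p||q|$ is not needed: $\tilde x\neq 0$ because $\tilde x_j\neq 0$, and that already makes $q_2$ finite and positive.

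\emph{Step 3 (eigen-relation).} We have $x^{\mathrm g}=x(q_1q_2)$ with $q_1q_2\in\HH^\times$. Proposition~\ref{pro_gauge_freedom}(1) then gives $A x^{\mathrm g}=\lambda x^{\mathrm g}$ whenever $Ax=\lambda x$. Alternatively, one can apply the proposition twice, once for $q_1$ and once for $q_2$.

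There is no real obstacle here. The only point needing care is the order of the factors in $x_jq_1$: the noncommutativity of $\HH$ means $q_1$ must be $x_j^{-1}|x_j|$ multiplied on the right, not $|x_j|x_j^{-1}$ on the left, although these coincide because $|x_j|$ is real.
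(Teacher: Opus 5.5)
Your proposal is correct and follows essentially the same route as the paper: $\tilde x_j=x_jq_1=|x_j|>0$, normalization by the positive real $q_2$ (which keeps the pivot real and positive), and $Ax^{\mathrm g}=\lambda x^{\mathrm g}$ inherited from invariance under right scaling. One small note: your closing remark on factor order is moot for the pivot entry, since also $q_1x_j=|x_j|$. Where noncommutativity genuinely matters is that the scaling must act on the right of $x$, because left scaling would not in general preserve $Ax=\lambda x$.
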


\begin{proof}
Right scaling preserves \eqref{eq_left_eig}:
$A(xq)=(Ax)q=(\lambda x)q=\lambda(xq)$ for any $q\in\HH^\times$.
With $q_1=x_j^{-1}|x_j|$ we have $\tilde x_j=x_j q_1=|x_j|\in\RR_{>0}$.
Finally, $q_2\in\RR_{>0}$ normalizes the Euclidean norm, hence $\|x^{\mathrm g}\|_2=1$ and
$(x^{\mathrm g})_j\in\RR_{>0}$, which implies $g(x^{\mathrm g})=0$.
\end{proof}

\begin{Ex}[Concrete regauging]
\label{ex_regauge_concrete}
Let $n=2$ and $x=(1+\j,\ 2\i)^\top$ and choose $j=1$.
Then $x_1=1+\j$ has modulus $|x_1|=\sqrt{2}$ and inverse $(1+\j)^{-1}=\tfrac12(1-\j)$.
Following Lemma~\ref{lem_regauge}, set
\[
q_1:=x_1^{-1}|x_1|=\frac{\sqrt{2}}{2}(1-\j),\;\; \tilde x:=xq_1,
\;\; q_2:=\|\tilde x\|_2^{-1},\;\; x^{\mathrm g}:=\tilde x\,q_2.
\]
A direct multiplication gives
\[
\tilde x_1=(1+\j)q_1=|x_1|=\sqrt{2}\in\RR_{>0},\qquad
\tilde x_2=(2\i)q_1=\sqrt{2}\,(\i+\k),
\]
and since $\|\tilde x\|_2^2=|\tilde x_1|^2+|\tilde x_2|^2=2+4=6$ we have $q_2=1/\sqrt{6}$, hence
\[
x^{\mathrm g}
=\frac{1}{\sqrt{6}}\begin{bmatrix}\sqrt{2}\\ \sqrt{2}(\i+\k)\end{bmatrix}
=\begin{bmatrix}1/\sqrt{3}\\ (\i+\k)/\sqrt{3}\end{bmatrix}.
\]
Therefore $\|x^{\mathrm g}\|_2=1$ and $(x^{\mathrm g})_1=1/\sqrt{3}\in\RR_{>0}$, i.e., $g(x^{\mathrm g})=0$.
\end{Ex}

\begin{Rem}[Gauge as a local cross-section and the role of the pivot]
\label{rem_quaternionic_phase}
The right-scaling action $x\mapsto xq$ ($q\in\HH^\times$) has four real degrees of freedom, and the constraint $g(x)=0$ fixes them by selecting a local cross-section of this action on the open set $\{x:\ x_j\neq 0\}$.
The exclusion of $x_j=0$ is therefore essential: if the pivot entry vanishes, the condition ``$x_j\in\RR_{>0}$'' cannot be enforced, and the gauge ceases to define a smooth local section.
In computations, we pick $j\in\arg\max_k |x_k|$ to keep $|x_j|$ away from zero and to improve conditioning of the linearized constraints; if $|x_j|$ becomes small during the iteration, one may switch the pivot and regauge.
\end{Rem}

\subsection{Quaternion-form Newton correction with gauge constraints}
\label{subsec_newton_derivation}
As discussed in Section~\ref{subsec_newton_gauge}, the eigenpair equation $Ax=\lambda x$ is invariant under the four-real-parameter right scaling $x\mapsto xq$ ($q\in\HH^\times$). The gauge constraint selects a canonical representative on the open set $\{x:\ x_j\neq 0\}$, turning the left eigenpair condition into a square smooth system in real coordinates. We now derive the corresponding Newton correction equations directly in $\HH$-form.

Define the eigenpair \emph{defect} (a vector in $\HH^n$)
\[
d(\lambda,x):=Ax-\lambda x\in\HH^n.
\]
(Scalar residual \emph{certificates} are reported separately as $\mathrm{res}(A,\lambda,v)=\|Av-\lambda v\|_2$ and
$\mathrm{res}_{\min}(A,\lambda)=\sigma_{\min}\!\bigl(\rho(A-\lambda I)\bigr)$.)
Then fix a pivot index $j\in\{1,\dots,n\}$ (kept \emph{fixed} in the local analysis), and use the gauge map \eqref{eq0gauge}.
The associated augmented map is
\begin{equation}
\label{eq_Fdef}
F:\HH\times\bigl(\HH^n\setminus\{x:\ x_j=0\}\bigr)\to \HH^n\times\RR^4,
\;\;
F(\lambda,x):=\bigl(d(\lambda,x),\,g(x)\bigr).
\end{equation}
Thus, solving $F(\lambda,x)=(0,0)$ enforces both the eigenpair condition $d(\lambda,x)=0$ and the gauge $g(x)=0$,
so that $(\lambda,x)$ is a \emph{gauged} left eigenpair.
Conversely, any left eigenpair $(\lambda,x)$ with $x_j\neq0$ can be regauged (Lemma~\ref{lem_regauge})
to satisfy $F(\lambda,x)=(0,0)$ without changing $\lambda$.
All derivatives below are understood with respect to the underlying real coordinates
$\HH\simeq\RR^4$ and $\HH^n\simeq\RR^{4n}$.

\noindent\emph{Jacobian and Newton step.}
Let $(\lambda,x)$ be a current (regauged) iterate and consider increments
$(\Delta\lambda,\Delta x)\in\HH\times\HH^n$.
Newton's method chooses $(\Delta\lambda,\Delta x)$ so that the first-order (Jacobian) approximation of
$F(\lambda+\Delta\lambda,\,x+\Delta x)$ cancels the current defect $F(\lambda,x)$, i.e.,
\begin{equation}
\label{eq_newton_linearization}
DF(\lambda,x)[\Delta\lambda,\Delta x]= -\,F(\lambda,x),
\end{equation}
where $DF(\lambda,x)$ denotes the Fr\'echet derivative of $F$ at $(\lambda,x)$.
In block form,
\[
DF(\lambda,x)[\Delta\lambda,\Delta x]
=
\begin{bmatrix}
(A-\lambda I)\Delta x-(\Delta\lambda)\,x\\[0.5mm]
Dg(x)[\Delta x]
\end{bmatrix},
\]
where $(\Delta\lambda)\,x$ denotes left scalar multiplication of $x$ by $\Delta\lambda\in\HH$.
Thus, the Newton correction consists of
\begin{equation}
\label{eq_newton_core}
(A-\lambda I)\,\Delta x-(\Delta\lambda)\,x \;=\; -\,d(\lambda,x),
\end{equation}
together with the linearized gauge constraint $Dg(x)[\Delta x]= -\,g(x)$.
Since the iterates are kept regauged (so that $g(x)=0$), this reduces to $Dg(x)[\Delta x]=0$,
meaning that $\Delta x$ preserves the gauge \emph{to first order}.

\begin{Lem}[Linearized gauge constraints]
\label{lem_linearized_gauge}
Let $x\in\HH^n$ and $\Delta x\in\HH^n$. Then
\[
Dg(x)[\Delta x]=
\begin{bmatrix}
2\Re(x^\ast \Delta x)\\[0.3mm]
\Im(\Delta x_j)
\end{bmatrix}\in\RR^4.
\]
In particular, if $g(x)=0$, then the gauge is maintained to first order by imposing
\begin{equation}
\label{eq_newton_gauge_lin}
\Re(x^\ast \Delta x)=0,\qquad \Im(\Delta x_j)=0.
\end{equation}
\end{Lem}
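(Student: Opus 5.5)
The plan is to differentiate the two components of $g$ separately, working in the underlying real coordinates $\HH^n\simeq\RR^{4n}$. Both components are real-valued (or real-vector-valued) functions: the first is quadratic in $x$ and the second is linear in $x$. In each case I will expand $g(x+t\Delta x)$ for real $t$ and read off the coefficient of $t$, which gives the Fr\'echet derivative directly.

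For the first component I will expand
\[
\|x+t\Delta x\|_2^2=(x+t\Delta x)^\ast(x+t\Delta x)=x^\ast x+t\bigl(x^\ast\Delta x+\Delta x^\ast x\bigr)+t^2\,\Delta x^\ast\Delta x .
\]
The identity I need is $\Delta x^\ast x=\overline{x^\ast\Delta x}$. This holds componentwise because $\overline{pq}=\bar q\,\bar p$ in $\HH$, so $\overline{\bar x_k\,\Delta x_k}=\overline{\Delta x_k}\,x_k$; summing over $k$ gives the identity. Since $q+\bar q=2\Re(q)$, the linear term is $2\Re(x^\ast\Delta x)$. The quadratic remainder $t^2\|\Delta x\|_2^2$ shows that this linear term is the derivative. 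As a sanity check, $\Re(x^\ast\Delta x)=\rvec(x)^\top\rvec(\Delta x)$, which is the real Euclidean inner product, because $\Re(\bar p q)$ is the $\RR^4$ dot product.

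For the second component, the map $x\mapsto\Im(x_j)$ is the composition of the coordinate projection $x\mapsto x_j$ with $q\mapsto\Im(q)\simeq(b,c,d)\in\RR^3$. Both maps are real-linear, so the derivative is the map itself: $\Delta x\mapsto\Im(\Delta x_j)$. Stacking the two components gives the stated formula for $Dg(x)[\Delta x]$, with the $\RR^4$ structure $1+3$ matching \eqref{eq0gauge}.

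For the ``in particular'' claim, I will substitute into the linearized constraint $Dg(x)[\Delta x]=-g(x)$ used in the Newton step. When $g(x)=0$ this reduces to $Dg(x)[\Delta x]=0$. Dividing the first row by $2$ then gives exactly \eqref{eq_newton_gauge_lin}.

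The only step that needs care is the conjugation identity under noncommutativity. The order of factors matters there, so I must use $\overline{pq}=\bar q\bar p$ and not assume commutativity. Everything else is routine.
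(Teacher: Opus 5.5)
Your proposal is correct and follows essentially the same route as the paper: differentiate $x^\ast x$ using $(\Delta x)^\ast x=\overline{x^\ast\Delta x}$ together with $q+\bar q=2\Re(q)$, and use real-linearity of $x\mapsto\Im(x_j)$. Your expansion in $t$, your justification of the conjugation identity via $\overline{pq}=\bar q\,\bar p$, and your explicit reduction $Dg(x)[\Delta x]=-g(x)=0$ for the ``in particular'' clause add detail, but the argument is the same as the paper's.
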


\begin{proof}
The gauge map $g(x)$ consists of the normalization constraint $\|x\|_2^2-1=0$ and of the requirement that the selected
scalar entry $x_j$ is real, i.e., $\Im(x_j)=0\in\RR^3$.

Since $\|x\|_2^2=x^\ast x\in\RR$, the directional derivative in the direction $\Delta x$ is
\[
D(\|x\|_2^2)[\Delta x]
= D(x^\ast x)[\Delta x]
= (\Delta x)^\ast x + x^\ast(\Delta x)
= 2\Re(x^\ast\Delta x),
\]
because $(\Delta x)^\ast x=\overline{x^\ast\Delta x}$ and $q+\overline{q}=2\Re(q)$ for any $q\in\HH$.

For the remaining three constraints, note that $x_j\in\HH$ is a scalar quaternion and the map
\[
\Im:\HH\to\RR^3,\qquad a+b\i+c\j+d\k \mapsto (b,c,d),
\]
is linear. Hence
\[
D(\Im(x_j))[\Delta x]=\Im(\Delta x_j)\in\RR^3,
\]
i.e., the last three components of $Dg(x)[\Delta x]$ are exactly the imaginary components of the increment in the selected entry $x_j$.
\end{proof}

\begin{Ex}[Newton equations for $n=1$]
\label{ex_newton_n1}
Let $n=1$ and $A=[q]$. Then $d(\lambda,x)=(q-\lambda)x$.
Any solution has $\lambda=q$ and arbitrary $x\neq 0$; a gauge such as $\|x\|_2=1$ and $x\in\RR_{>0}$ selects the
unique gauged eigenvector $x=1$.
\end{Ex}

\begin{Ex}[A diagonal $2\times2$ sanity check]
\label{ex_newton_diag_2x2}
Let $A=\diag(\alpha,\beta)$ with $\alpha\neq\beta$ in $\RR$ and consider the exact left eigenpair
$(\lambda_\star,x_\star)=(\alpha,e_1)$. With the pivot choice $j=1$, the gauge $g(e_1)=0$ holds.
At $(\lambda,x)=(\alpha,e_1)$, the defect is zero, hence the Newton correction solves a homogeneous system and
$\Delta\lambda=\Delta x=0$.
This is consistent with quadratic convergence at a simple (isolated) root.
\end{Ex}

\subsection{Real-coordinate form and the linear Newton system}
\label{subsec_newton_real_form}
The correction equations derived in Section~\ref{subsec_newton_derivation} are most naturally written in $\HH$-form.
However, they define a \emph{real} Newton step: the unknowns $(\Delta x,\Delta\lambda)$ contain $4n+4$ real degrees of
freedom, the gauge constraints take values in $\RR^4$, and the Jacobian is an $\RR$-linear operator.
Rather than introducing quaternionic linear algebra for such mixed $\HH$/$\RR$ constraints, we pass to real coordinates,
where one Newton step becomes a single square real linear system that can be solved robustly by standard routines.

\noindent\emph{Real representation of $A$ and of the scalar coupling.}
Let $\rho(A)\in\RR^{4n\times4n}$ be as in Def.~\ref{def_rho}. We write
\[
u:=\rvec(\Delta x)\in\RR^{4n},\qquad w:=\rvec(\Delta\lambda)\in\RR^{4}.
\]
Besides the left-multiplication matrix $L(\cdot)$ (Def.~\ref{def_Lq}), we use the real matrix $R(q)\in\RR^{4\times4}$
representing \emph{right} multiplication by $q$, defined by
\[
\rvec(pq)=R(q)\,\rvec(p)\qquad\text{for all }p,q\in\HH.
\]
If $q=a+b\i+c\j+d\k$, then
\[
R(q):=
\begin{bmatrix}
 a & -b & -c & -d\\
 b &  a &  d & -c\\
 c & -d &  a &  b\\
 d &  c & -b &  a
\end{bmatrix}.
\]
Therefore, the map $\Delta\lambda\mapsto(\Delta\lambda)x$ becomes
\begin{equation}
\label{eq_Bx_def}
\rvec\bigl((\Delta\lambda)x\bigr)=
\underbrace{\begin{bmatrix}
R(x_1)\\ \vdots\\ R(x_n)
\end{bmatrix}}_{=:B(x)\in\RR^{4n\times4}}\,
\rvec(\Delta\lambda)
=B(x)\,w .
\end{equation}

\begin{Ex}[The coupling matrix $B(x)$ for a simple $x$]
\label{ex_Bx_simple}
Let $n=2$ and $x=(1,\ \j)^\top$. Then
\[
B(x)=\begin{bmatrix} R(1)\\ R(\j)\end{bmatrix}
=\begin{bmatrix} I_4\\ R(\j)\end{bmatrix}\in\RR^{8\times 4}.
\]
\end{Ex}

\noindent\emph{Real form of the constraints.}
Let $\rvec(x)\in\RR^{4n}$ be the stacked real coefficients of $x\in\HH^n$. Then
\[
\Re(x^\ast\Delta x)=\rvec(x)^\top \rvec(\Delta x)=\rvec(x)^\top u,
\]
and $\Im(\Delta x_j)=0$ selects the three imaginary coordinates of the $j$th quaternion component of $\Delta x$.
Accordingly, the four linearized constraints \eqref{eq_newton_gauge_lin} can be written as
\begin{equation}
\label{eq_Cx_def}
C(x)\,u=0,
\end{equation}
where $C(x)\in\RR^{4\times4n}$ has first row $\rvec(x)^\top$ and the remaining three rows are coordinate selectors
extracting the $(\i,\j,\k)$-entries of the $j$th quaternion in the ordering of Def.~\ref{def_real_vec_map}.

\begin{Ex}[Constraint rows $C(x)$ in a toy case]
\label{ex_Cx_toy}
Let $n=2$, pivot $j=1$, and write $u=\rvec(\Delta x)\in\RR^8$. Then $C(x)u$ consists of:
(i) one row $\rvec(x)^\top u$ enforcing $\Re(x^\ast\Delta x)=0$,
(ii) three rows extracting the imaginary coordinates of $\Delta x_1$.
\end{Ex}

\noindent\emph{The Newton linear system.}
Let $u=\rvec(\Delta x)$ and $w=\rvec(\Delta\lambda)$. Then the quaternionic Newton equations
\eqref{eq_newton_core} together with the linearized gauge constraints \eqref{eq_newton_gauge_lin}
are equivalent to the square real system
\begin{equation}
\label{eq_newton_real_system}
\begin{bmatrix}
\rho(A-\lambda I) & -B(x)\\
C(x)              &  0
\end{bmatrix}
\begin{bmatrix}
u\\ w
\end{bmatrix}
=
-
\begin{bmatrix}
\rvec\bigl(d(\lambda,x)\bigr)\\ 0
\end{bmatrix},
\end{equation}
where the zero in the lower block uses $g(x)=0$ (the iterates are regauged before each linear solve).
System \eqref{eq_newton_real_system} has dimension $(4n+4)\times(4n+4)$ and can be solved by standard real linear-algebra
routines \cite{Zhang1997,Rodman2014,FarenickPidkowich2003}.

\begin{Rem}[Consistency check for the embedding]
\label{rem_newton_real_consistency}
The core Newton equation \eqref{eq_newton_core} in $\HH^n$ reads
\[
(A-\lambda I)\Delta x-(\Delta\lambda)x=-r.
\]
Applying $\rvec(\cdot)$ and using Theorem~\ref{th_rho_properties}, together with the definitions of $B(x)$ and $C(x)$,
gives
\[
\rho(A-\lambda I)u-B(x)w=-\rvec(r),
\]
which is exactly the first block row of \eqref{eq_newton_real_system}.
\end{Rem}

\subsection{Local convergence}
\label{subsec_newton_convergence}
Quadratic convergence of Newton's method is guaranteed only in a neighborhood of an \emph{isolated} solution of the \emph{gauged} system $F(\lambda,x)=(0,0)$, equivalently when the Jacobian of $F$ at the solution is invertible.
This motivates the following simplicity notion.

\begin{Def}[Simple gauged left eigenpair]
\label{def_simple_gauged_left_eig}
Let $(\lambda_\star,x_\star)\in\HH\times\HH^n$ satisfy $Ax_\star=\lambda_\star x_\star$ and $g(x_\star)=0$,
with $(x_\star)_j\neq 0$.
We call $(\lambda_\star,x_\star)$ \emph{simple (gauged)} if:
\begin{enumerate}
\item $\ker(A-\lambda_\star I)=\{x_\star q:\ q\in\HH\}$ (geometric multiplicity $1$);
\item $x_\star\notin \Range(A-\lambda_\star I)$.
\end{enumerate}
\end{Def}

\noindent
Here condition~(1) excludes additional independent eigenvectors, while condition~(2) rules out a
generalized-eigenvector relation; together they ensure that the Jacobian of the gauged system is invertible at
$(\lambda_\star,x_\star)$, which is the standard hypothesis for local quadratic convergence of Newton's method.

\begin{The}[Local quadratic convergence]
\label{th_newton_left_eigs}
Let $A\in\HH^{n\times n}$ and let $(\lambda_\star,x_\star)$ be a simple gauged left eigenpair
(Def.~\ref{def_simple_gauged_left_eig}). Then there exists a neighborhood $\mathcal U$ of $(\lambda_\star,x_\star)$
(in $\RR^{4n+4}$ coordinates) such that, for every initial $(\lambda_0,x_0)\in\mathcal U$ with $(x_0)_j\neq 0$,
the Newton iteration defined by \eqref{eq_newton_core} and \eqref{eq_newton_gauge_lin} is well-defined and,
after regauging each step as in Lem.~\ref{lem_regauge}, converges to $(\lambda_\star,x_\star)$ with quadratic rate.
\end{The}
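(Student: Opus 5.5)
The plan is to reduce the statement to the classical local convergence theorem for Newton's method applied to a smooth square system. In real coordinates, $F$ from \eqref{eq_Fdef} is a polynomial (hence $C^\infty$) map from an open subset of $\RR^{4n+4}$ to $\RR^{4n+4}$. Its Jacobian at $(\lambda,x)$ is exactly the block matrix in \eqref{eq_newton_real_system}:
\[
J(\lambda,x)=\begin{bmatrix}\rho(A-\lambda I) & -B(x)\\ C(x) & 0\end{bmatrix},
\]
with the first row of $C(x)$ carrying the factor $2$ from Lemma~\ref{lem_linearized_gauge}, which is irrelevant for invertibility. The Jacobian is locally Lipschitz, so the standard Newton–Kantorovich/local theorem gives quadratic convergence as soon as $J(\lambda_\star,x_\star)$ is nonsingular. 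The remaining tasks are therefore (a) to prove nonsingularity of $J_\star$ from Definition~\ref{def_simple_gauged_left_eig}, and (b) to check that the regauging step does not destroy quadratic convergence.

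\textbf{Step (a): nonsingularity of $J_\star$.} Suppose $J_\star[u;w]=0$, and write $\Delta x,\Delta\lambda$ for the corresponding quaternions. The first block row gives
\[
(A-\lambda_\star I)\Delta x=(\Delta\lambda)x_\star .
\]
If $\Delta\lambda\neq0$, then
\[
x_\star=(\Delta\lambda)^{-1}(A-\lambda_\star I)\Delta x .
\]
Here I would need $(\Delta\lambda)^{-1}$ to commute with $A-\lambda_\star I$ in order to conclude $x_\star\in\Range(A-\lambda_\star I)$. This is the main obstacle, since left scalar multiplication does not commute with $A$.

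To handle it, I would interpret condition~(2) of Definition~\ref{def_simple_gauged_left_eig} as the statement that the real-linear map $\Delta\lambda\mapsto(\Delta\lambda)x_\star$ has image meeting $\Range(A-\lambda_\star I)$ only in $0$. Equivalently, the $4n\times(4n+4)$ matrix $[\rho(A-\lambda_\star I)\ \ -B(x_\star)]$ has rank $4n$ with a kernel of dimension exactly $4$. This is presumably the intended meaning, and it is what the dimension count needs. Under this reading, $(\Delta\lambda)x_\star=0$, which forces $\Delta\lambda=0$ because $x_\star\neq0$ and $\HH$ has no zero divisors.

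Then $(A-\lambda_\star I)\Delta x=0$, so condition~(1) gives $\Delta x=x_\star q$ for some $q\in\HH$. The gauge rows finish the argument. The pivot row gives $\Im((x_\star)_j q)=0$, and since $(x_\star)_j>0$ is real this means $\Im q=0$. The norm row gives $\Re(x_\star^\ast x_\star q)=\Re q=0$, using $\|x_\star\|=1$. Hence $q=0$, so the kernel is trivial and $J_\star$ is invertible. I would state explicitly that condition~(2) is read as the transversality $\Range(A-\lambda_\star I)\cap\HH x_\star=\{0\}$ (left $\HH$-span), since the literal condition $x_\star\notin\Range$ is equivalent only when $\HH x_\star\cap\Range$ is a left $\HH$-subspace.

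\textbf{Step (b): regauging.} By the inverse function theorem, $F$ is a local diffeomorphism near the root, and the classical estimate gives
\[
\|z_{k+1}^{N}-z_\star\|\le c\|z_k-z_\star\|^2
\]
for the unregauged Newton update $z^N$. The regauging map $\Gamma$ of Lemma~\ref{lem_regauge} is smooth on $\{x_j\neq0\}$, fixes $z_\star$, and leaves $\lambda$ unchanged. It is therefore locally Lipschitz, so
\[
\|\Gamma(z^N)-z_\star\|=\|\Gamma(z^N)-\Gamma(z_\star)\|\le L\|z^N-z_\star\|\le Lc\|z_k-z_\star\|^2 .
\]
Finally, I would shrink $\mathcal U$ to a ball in which $J$ stays invertible with a uniformly bounded inverse, $(x)_j$ stays bounded away from $0$, and $Lc\,r<1$. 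Induction then shows that the iterates remain in $\mathcal U$ and converge quadratically.
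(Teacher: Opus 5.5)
Your proof is correct for the hypothesis you actually use. Its skeleton matches the paper's: show the gauged Jacobian has trivial kernel, then invoke the classical Newton theorem. The two arguments part ways at exactly the step you flagged, and there yours is the sound one.

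The paper argues that from $(\Delta\lambda)x_\star=(A-\lambda_\star I)\Delta x$, right-multiplication by $(\Delta\lambda)^{-1}$ yields $x_\star\in\Range(A-\lambda_\star I)$. That inference fails. Right-multiplication produces the vector with entries $(\Delta\lambda)(x_\star)_k(\Delta\lambda)^{-1}$, which is not $x_\star$ in general, and $\Range(A-\lambda_\star I)$ is closed under right scaling but not under left scaling.

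Your transversality reading $\Range(A-\lambda_\star I)\cap\HH x_\star=\{0\}$ is therefore not merely the likely intended meaning; it is necessary, because the statement fails with the literal condition~(2). Take $A=\begin{bmatrix}-\k&\i\\ \i&\k\end{bmatrix}$, $\lambda_\star=0$, $x_\star=\tfrac{1}{\sqrt2}(1,\j)^\top$ and pivot $j=1$.
\begin{itemize}
\item One checks $\ker A=\{(p,\j p)^\top:p\in\HH\}=x_\star\HH$ and $\Range A=\{(p,-\j p)^\top:p\in\HH\}$, so $x_\star\notin\Range A$. Both conditions of Definition~\ref{def_simple_gauged_left_eig} hold literally.
\item Yet $\i x_\star=\tfrac{1}{\sqrt2}(\i,\k)^\top\in\Range A$.
\item The nonzero pair $(\Delta\lambda,\Delta x)=\bigl(\i,\tfrac{1}{\sqrt2}(0,1)^\top\bigr)$ solves the homogeneous Jacobian system, including $\Re(x_\star^\ast\Delta x)=0$ and $\Im(\Delta x_1)=0$.
\item In fact $\sigma_\ell(A)=\{0\}$, the coalesced Huang--So case. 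The gauged root is isolated but the Jacobian is singular, so the classical theorem does not apply.
\end{itemize}

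Your dimension count shows more: condition~(1) together with transversality is equivalent to invertibility of the Jacobian, so it is the sharp hypothesis.

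Your Step~(b) also fills a gap the paper leaves open. The paper only asserts that the regauged iteration converges quadratically. You justify it by composing each Newton step with the smooth, $z_\star$-fixing regauging map, which is locally Lipschitz. You implicitly use that after regauging $g(x_k)=0$, so the step computed with $Dg(x_k)[\Delta x]=0$ is the true Newton step for $F$. This is worth one explicit sentence.
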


\begin{proof}
In real coordinates, the Newton system is a square smooth system, so local quadratic convergence follows
from the classical Newton theorem once the Jacobian is invertible.  We therefore show that the \emph{homogeneous} Jacobian
equations admit only the trivial solution.  Condition~(2) will rule out a nonzero $\Delta\lambda$, condition~(1) will force
$\Delta x$ to be a pure right-scaling of $x_\star$, and the linearized gauge constraints will eliminate exactly that scaling.

We work in real coordinates, where $F$ is a $C^\infty$ map $\RR^{4n+4}\to\RR^{4n+4}$: the map
$(\lambda,x)\mapsto Ax-\lambda x$ is bilinear over $\RR$, and $g$ is polynomial in the real components.
Thus, standard local Newton theory applies once the Jacobian is invertible; see, e.g.,
\cite{OrtegaRheinboldt2000,Deuflhard2011,DennisSchnabel1996}.

At a solution $(\lambda_\star,x_\star)$, the homogeneous Jacobian system reads
\[
(A-\lambda_\star I)\Delta x-(\Delta\lambda)x_\star=0,\qquad
\Re(x_\star^\ast\Delta x)=0,\qquad \Im(\Delta x_j)=0.
\]
We show that this system has only $\Delta\lambda=0$ and $\Delta x=0$.

Assume first that $\Delta\lambda\neq 0$.  Then the first equation implies
\[
(\Delta\lambda)x_\star=(A-\lambda_\star I)\Delta x\in \Range(A-\lambda_\star I).
\]
Since every nonzero quaternion is invertible, right-multiplication by $(\Delta\lambda)^{-1}$ gives
$x_\star\in \Range(A-\lambda_\star I)$, contradicting condition~(2).  Hence $\Delta\lambda=0$, and
\[
(A-\lambda_\star I)\Delta x=0.
\]
By condition~(1) (geometric multiplicity one), $\ker(A-\lambda_\star I)=\{x_\star q:\ q\in\HH\}$, so there exists
$q\in\HH$ such that $\Delta x=x_\star q$.

We now use the linearized gauge constraints to show that $q=0$.
From $\Re(x_\star^\ast\Delta x)=0$ and $\Delta x=x_\star q$ we obtain
\[
0=\Re(x_\star^\ast x_\star q)=\Re\bigl((x_\star^\ast x_\star)\,q\bigr)
=\Re\bigl(\|x_\star\|_2^2\,q\bigr)
=\|x_\star\|_2^2\,\Re(q),
\]
hence $\Re(q)=0$ (since $\|x_\star\|_2^2>0$).

Next, the constraint $\Im(\Delta x_j)=0$ gives
\[
0=\Im(\Delta x_j)=\Im\bigl((x_\star)_j\,q\bigr).
\]
Because $g(x_\star)=0$ and $(x_\star)_j\neq 0$, the chosen pivot satisfies $(x_\star)_j\in\RR_{>0}$.
Multiplication by a nonzero real scalar preserves (and only rescales) the imaginary part, so
\[
\Im\bigl((x_\star)_j\,q\bigr)=(x_\star)_j\,\Im(q)=0
\qquad\Longrightarrow\qquad \Im(q)=0.
\]
Thus $\Re(q)=0$ and $\Im(q)=0$, so $q=0$ and consequently $\Delta x=x_\star q=0$.

We have shown that the homogeneous Jacobian system admits only the trivial solution, so the Jacobian of $F$ is invertible at
$(\lambda_\star,x_\star)$. Local quadratic convergence of the (regauged) Newton iteration then follows from the classical
Newton theorem in $\RR^{4n+4}$.
\end{proof}

\subsection{Spherical solution sets}
\label{subsec_newton_spherical}

Theorem~\ref{th_newton_left_eigs} covers the \emph{isolated} case, where the Jacobian of the gauged system is invertible.
This framework does not apply when the left spectrum contains continua (most notably, spherical components). In that situation, the zero set of
\[
F(\lambda,x)=\bigl(Ax-\lambda x,\ g(x)\bigr)
\]
has positive dimension, and consequently $DF$ is rank deficient at every point on that set. Hence, one cannot, in general, expect quadratic convergence to a \emph{uniquely determined} root.

\begin{Pro}[Non-isolated zeros imply Jacobian singularity]
\label{pro_nonisolated_implies_singular}
Assume $F$ is $C^1$ in real coordinates. If $(\lambda_\star,x_\star)$ is not an isolated zero of $F$, then
$DF(\lambda_\star,x_\star)$ is not invertible.
\end{Pro}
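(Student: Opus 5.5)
The plan is to argue by contraposition, using the inverse function theorem in real coordinates. Identify $F$ with a $C^1$ map $\RR^{4n+4}\to\RR^{4n+4}$ on the open set $\Omega:=\HH\times(\HH^n\setminus\{x:\ x_j=0\})$, which is open in $\RR^{4n+4}$ because $x\mapsto x_j$ is continuous. Suppose $DF(\lambda_\star,x_\star)$ is invertible. We will show that $(\lambda_\star,x_\star)$ is then an isolated zero of $F$, contradicting the hypothesis.

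First, the inverse function theorem applies: $F$ is $C^1$, and its derivative at $z_\star:=(\lambda_\star,x_\star)$ is an invertible square real matrix. So there are open neighbourhoods $U\ni z_\star$ with $U\subset\Omega$, and $V\ni F(z_\star)=0$, such that $F|_U:U\to V$ is a bijection, indeed a $C^1$ diffeomorphism. In particular $F$ is injective on $U$. Any zero $z\in U$ satisfies $F(z)=0=F(z_\star)$, so injectivity gives $z=z_\star$. Hence $U$ contains no zero of $F$ other than $z_\star$, so $z_\star$ is isolated, which is the desired contradiction.

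If one wants to avoid the full inverse function theorem, a more elementary route is available. By $C^1$ regularity, $F(z)=DF(z_\star)(z-z_\star)+o(\|z-z_\star\|)$, and $\|DF(z_\star)h\|\ge c\|h\|$ with $c=\|DF(z_\star)^{-1}\|^{-1}>0$. This gives $\|F(z)\|\ge \tfrac{c}{2}\|z-z_\star\|>0$ for $0<\|z-z_\star\|<\delta$, which again shows isolation. Note that this first-order argument only uses differentiability at $z_\star$.

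No step is a real obstacle. The only points to state explicitly are that $F$ is viewed in real coordinates (the domain and codomain both have real dimension $4n+4$, so $DF$ is square and "invertible" is meaningful), and that the pivot $j$ is held fixed, so that the domain is open and contains a neighbourhood of $z_\star$.
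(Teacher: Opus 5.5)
Your proof is correct, but it runs in the opposite direction from the paper's. The paper argues directly. It takes a differentiable curve $t\mapsto(\lambda(t),x(t))$ of zeros through $(\lambda_\star,x_\star)$ with nonzero velocity and differentiates $F(\lambda(t),x(t))\equiv 0$ at $t=0$, so the velocity is an explicit nonzero element of the kernel of $DF(\lambda_\star,x_\star)$. You instead prove the contrapositive, via the inverse function theorem or the first-order bound $\|F(z)\|\ge\frac{c}{2}\|z-z_\star\|$.

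The paper's route buys geometric insight: the kernel vector is a tangent direction to the solution set. This is exactly what Example~\ref{ex_newton_spherical_warning} exhibits, since there $\Delta\lambda=\j$, $\Delta x=\frac{1}{\sqrt2}(0,\j)^\top$ is the velocity at $t=0$ of $\mu(t)=\cos t\,\i+\sin t\,\j$, $x(t)=\frac{1}{\sqrt2}(1,\mu(t))^\top$. Its cost is that it presupposes such a curve. Under the stated hypothesis that $F$ is merely $C^1$, non-isolation does not supply one: zeros can accumulate at a point along a sequence containing no arc at all. An example is the $C^1$ function $t\mapsto t^3\sin(1/t)$ (extended by $0$) on $\RR$. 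For the polynomial $F$ at hand one could invoke curve selection, but only with a one-sided arc and a leading-order expansion in place of the velocity.

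Your argument needs nothing of this kind. The elementary version uses only differentiability at $z_\star$ and yields a quantitative separation estimate with $c=\|DF(z_\star)^{-1}\|^{-1}$. If you want the paper's kernel-vector conclusion under the stated hypotheses, the direct form of your estimate gives it. Pick zeros $z_k\to z_\star$ with $z_k\neq z_\star$, and pass to a subsequence with $(z_k-z_\star)/\|z_k-z_\star\|\to h$. Dividing $0=DF(z_\star)(z_k-z_\star)+o(\|z_k-z_\star\|)$ by $\|z_k-z_\star\|$ then gives $DF(z_\star)h=0$ with $\|h\|=1$.
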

\begin{proof}
If $(\lambda_\star,x_\star)$ is not isolated, there exists a differentiable curve
$t\mapsto(\lambda(t),x(t))$ with $(\lambda(0),x(0))=(\lambda_\star,x_\star)$ and $F(\lambda(t),x(t))\equiv 0$
for $t$ in a neighborhood of $0$. Differentiating at $t=0$ yields
\[
DF(\lambda_\star,x_\star)\,[\dot\lambda(0),\dot x(0)]=0,
\]
with $(\dot\lambda(0),\dot x(0))\neq 0$, hence the kernel of $DF(\lambda_\star,x_\star)$ is nontrivial.
\end{proof}

Nevertheless, Newton steps remain valuable as a \emph{local defect-reduction} mechanism: the correction equations reduce $\|F(\lambda,x)\|$ in directions transverse to the solution set, while motion \emph{along} a continuum is not
controlled and is typically determined by the initial guess and algorithmic choices (e.g., damping, stopping rule, and the linear solver). For this reason, we do not claim a uniqueness or quadratic-rate theorem in spherical cases.

At the same time, the same multi-start Newton pipeline that is used to recover isolated eigenvalues can be exploited to \emph{detect spherical components} of $\sigma_\ell(A)$. When a spherical continuum is present, different initializations typically converge to different points on the same manifold. Collecting sufficiently many distinct converged points thus provides samples from the component. In our computations, we post-process these samples in $\RR^4$ by clustering and by robustly fitting a sphere model (center, radius, and the supporting affine $3$-space), with outlier rejection, and we
validate the inliers by residual-based certificates (Section~\ref{subsec:residual_certificates}). This does not give
completeness guarantees---Newton does not ``enumerate'' a continuum---but it yields reproducible numerical evidence for spherical structure, as illustrated in Supplement~B.

\begin{Ex}[A spherical case where Newton is delicate]
\label{ex_newton_spherical_warning}
For
\[
A=\begin{bmatrix}0&1\\ -1&0\end{bmatrix}\in\RR^{2\times2}\subset\HH^{2\times2},
\]
the left spectrum is a $2$-sphere, hence eigenpairs are not isolated; see \cite{HuangSo2001}.
This non-isolated structure is reflected in the Jacobian of the gauged system.

Fix the pivot index $j=1$ and consider the gauged solution
\[
\lambda=\i,\qquad x=\frac1{\sqrt2}\begin{bmatrix}1\\ \i\end{bmatrix},
\]
so that $Ax=\lambda x$, $\|x\|_2=1$, and $x_1\in\RR_{>0}$.
Then the homogeneous Jacobian equations
\[
(A-\lambda I)\Delta x-(\Delta\lambda)x=0,\qquad
\Re(x^\ast\Delta x)=0,\qquad
\Im(\Delta x_1)=0
\]
admit a nonzero solution, for instance
\[
\Delta\lambda=\j,\qquad
\Delta x=\frac1{\sqrt2}\begin{bmatrix}0\\ \j\end{bmatrix}.
\]
Indeed,
\[
(A-\i I)\Delta x
=\frac1{\sqrt2}\begin{bmatrix}\j\\ -\i\j\end{bmatrix}
=\frac1{\sqrt2}\begin{bmatrix}\j\\ -\k\end{bmatrix}
=(\Delta\lambda)x,
\]
and $\Im(\Delta x_1)=0$, while $x^\ast\Delta x=\tfrac12\,\i\j=\tfrac12\,\k$, hence $\Re(x^\ast\Delta x)=0$.
Therefore, the Jacobian of the gauged system is not invertible at $(\lambda,x)$, and quadratic convergence cannot be
expected in general.
\end{Ex}

\subsection{Algorithm}
\label{subsec_newton_algorithm}
\begin{Alg}[Direct Newton iteration for a gauged left eigenpair $Ax=\lambda x$]
\label{alg_newton_left_eig}
\noindent\textbf{Input:} $A\in\HH^{n\times n}$; initial guess $(\lambda^{(0)},x^{(0)})$ with $x^{(0)}\neq 0$;
tolerance $\varepsilon$; maximum iterations $K_{\max}$; damping parameter $\alpha\in(0,1]$ (optional).\\
\textbf{Output:} approximate gauged left eigenpair $(\lambda,x)$ and residual $\|Ax-\lambda x\|_2$.
\begin{enumerate}
\item \emph{Initialization and pivot selection:}
set $\lambda\gets \lambda^{(0)}$, $x\gets x^{(0)}$, choose
$j\in\arg\max_k |x_k|$, and regauge $x$ (Lem.~\ref{lem_regauge}) so that $g(x)=0$ and $x_j>0$.
\item \emph{Iterate} for $k=0,1,\dots,K_{\max}-1$:
\begin{enumerate}
\item \emph{Residual check:} set $r\gets Ax-\lambda x$. If $\|r\|_2\le \varepsilon$, terminate.
\item \emph{Newton correction:} solve \eqref{eq_newton_core} together with \eqref{eq_newton_gauge_lin}
(equivalently, solve the real system \eqref{eq_newton_real_system}) for $(\Delta x,\Delta\lambda)$.
\item \emph{Damped update:} set $x\gets x+\alpha\,\Delta x$ and $\lambda\gets \lambda+\alpha\,\Delta\lambda$.
\item \emph{Regauging:} enforce $g(x)=0$ and $x_j>0$ again using Lem.~\ref{lem_regauge}.
Optionally, if $|x_j|$ becomes too small, switch the pivot index and regauge
(for the local theory, keep $j$ fixed).
\end{enumerate}
\item \emph{Return:} $(\lambda,x)$ and $\|Ax-\lambda x\|_2$.
\end{enumerate}
\end{Alg}

\begin{Ex}[One-step Newton sanity check on a shifted diagonal]
\label{ex_newton_shifted_diag}
Let $A=\diag(\alpha,\beta)$ with $\alpha\neq\beta$ in $\RR$. Consider the left eigenpair near
$(\lambda_\star,x_\star)=(\alpha,e_1)$ and choose pivot $j=1$.
If $(\lambda,x)$ is gauged with $x=(x_1,x_2)^\top$ and $x_1\in\RR_{>0}$, then
\[
d(\lambda,x)=\begin{bmatrix}(\alpha-\lambda)x_1\\ (\beta-\lambda)x_2\end{bmatrix}.
\]
Equation \eqref{eq_newton_core} decouples in the second component as
\[
(\beta-\lambda)\Delta x_2-(\Delta\lambda)x_2=-(\beta-\lambda)x_2,
\]
which is solved by $\Delta x_2=-x_2$ when $\Delta\lambda=0$.
Thus, away from the target eigenpair, a Newton step tends to suppress spurious components in the eigenvector.
\end{Ex}

\subsection{Extensions}
\label{subsec_newton_extensions}
\begin{Rem}[Generalized pencils and Rosenbrock-type systems]
\label{rem_pencil_extension}
The same construction applies to generalized left eigenpairs of a quaternionic pencil $M-\lambda E$:
solve $(M-\lambda E)z=0$ and impose a gauge on $z$ (or on $Ez$ when $Ez\neq 0$).
The Newton correction remains affine in $\lambda$,
\[
(M-\lambda E)\Delta z-(\Delta\lambda)\,Ez=-\bigl(M-\lambda E\bigr)z,
\]
together with the same four real constraints that fix the right scaling.
This is particularly convenient for descriptor and Rosenbrock-type linearizations, where the dependence on $\lambda$
is affine by design.
\end{Rem}

\section{Illustrative examples from the literature}
\label{sec:illustrative_literature_examples}
This section collects explicit test matrices from the literature and illustrates the behavior of our Newton-based solver.
Whenever an explicit description of $\sigma_\ell(A)$ is available (e.g., the $2\times2$ cases of Huang and So), we quote it and compare against our computed outputs. Otherwise, we report numerical evidence via residual-based certificates. No implementation details (software, parameter choices) are needed for the
interpretation below.

\subsection{\texorpdfstring{$2\times2$}{2x2} examples of Huang and So}
\label{subsec:examples_huangso}
\begin{Ex}\label{ex:HS_25}
\emph{(Two real left eigenvalues; \cite[Ex.~2.5]{HuangSo2001}).}
Consider
\[
A_{\mathrm{HS},2.5}=
\begin{bmatrix}
0 & 1+\i\\
1-\i & 0
\end{bmatrix}\in\HH^{2\times2}.
\]
Huang and So show that
\[
\sigma_\ell(A_{\mathrm{HS},2.5})=\{+\sqrt{2},-\sqrt{2}\}.
\]
Our multi-start solver returns two approximations that match $\pm\sqrt2$ up to roundoff.
In a representative run, the (eigenvalue-only) certificates over the two returned values are
\[
\begin{array}{rcl}
\mathrm{res}_{\min}(A_{\mathrm{HS},2.5},\lambda)\ \text{(median|max)}
&\approx& 9.70\times10^{-13}\ \big|\ 1.94\times10^{-12},\\[2mm]
\frac{\mathrm{res}_{\min}(A_{\mathrm{HS},2.5},\lambda)}{\max\{1,\|A_{\mathrm{HS},2.5}\|_2\}}\ \text{(median|max)}
&\approx& 2.84\times10^{-13}\ \big|\ 5.68\times10^{-13}.
\end{array}
\]
One computed representative carries spurious $\j$/$\k$ components at the $10^{-12}$ level
(e.g., $+\sqrt2+\mathcal{O}(10^{-12})\j+\mathcal{O}(10^{-12})\k$), while the other is real after rounding and is certified
near machine precision ($\mathrm{res}_{\min}\approx 1.5\times10^{-16}$ in this run), consistent with local quadratic
convergence at isolated solutions.
\end{Ex}

\begin{Ex}\label{ex:HS_26}
\emph{(Two non-real left eigenvalues; \cite[Ex.~2.6]{HuangSo2001}).}
Consider
\[
A_{\mathrm{HS},2.6}=
\begin{bmatrix}
0 & \i\\
\j & 1
\end{bmatrix}\in\HH^{2\times2}.
\]
Huang and So give the explicit left spectrum
\[
\sigma_\ell(A_{\mathrm{HS},2.6})
=\left\{
\frac{1+\i+\j-\k}{2},\;
\frac{1-\i-\j-\k}{2}
\right\}.
\]
Our solver returns these two values up to floating-point roundoff.
In a representative run, the certificates (median|max over the two returned values) are
\[
\begin{array}{rcl}
\mathrm{res}_{\min}(A_{\mathrm{HS},2.6},\lambda)\;
\text{(median|max)}
&\approx &1.34\times10^{-11}\ \big|\ 2.68\times10^{-11},
\\[2mm]
\frac{\mathrm{res}_{\min}(A_{\mathrm{HS},2.6},\lambda)}{\max\{1,\|A_{\mathrm{HS},2.6}\|_2\}}\;
\text{(median|max)}
&\approx& 4.91\times10^{-12}\ \big|\ 9.81\times10^{-12}.
\end{array}
\]
As in Example~\ref{ex:HS_25}, one of the two eigenvalues is typically certified at essentially machine precision
($\mathrm{res}_{\min}\approx 6.3\times10^{-17}$ in this run).
\end{Ex}

\begin{Ex}\label{ex:HS_27}
\emph{(Spherical (infinite) left spectrum; \cite[Ex.~2.7]{HuangSo2001}).}
Consider
\[
A_{\mathrm{HS},2.7}=
\begin{bmatrix}
2 & \i\\
-\i & 2
\end{bmatrix}\in\HH^{2\times2}.
\]
Huang and So show that $\sigma_\ell(A_{\mathrm{HS},2.7})$ is infinite and can be parametrized as
\[
\sigma_\ell(A_{\mathrm{HS},2.7})
=\left\{
2-\beta-\delta\,\j+\gamma\,\k:\ \beta^2+\gamma^2+\delta^2=1
\right\},
\]
i.e., a $2$-sphere in the affine $3$-space $2+\mathrm{span}_\RR\{1,\j,\k\}$.
In this case Newton's method cannot enumerate $\sigma_\ell(A_{\mathrm{HS},2.7})$; instead it converges to a particular
point on the solution manifold.
In a representative multi-start run, two distinct points on the sphere are returned with
\[
\begin{array}{rcl}
\mathrm{res}_{\min}(A_{\mathrm{HS},2.7},\lambda)\ \text{(median|max)}
&\approx& 1.11\times10^{-11}\ \big|\ 1.89\times10^{-11},
\\[2mm]
\frac{\mathrm{res}_{\min}(A_{\mathrm{HS},2.7},\lambda)}{\max\{1,\|A_{\mathrm{HS},2.7}\|_2\}}\ \text{(median|max)}
&\approx& 2.25\times10^{-12}\ \big|\ 3.84\times10^{-12}.
\end{array}
\]
Enabling the sphere-detection stage (sampling + fitting) confirms a spherical component with high reliability:
from $20$ sampled points the fitted parameters are \emph{center} $2$ and \emph{radius} $1$, and after refinement the inlier
certificates satisfy
\[
\mathrm{res}_{\min}(A_{\mathrm{HS},2.7},\lambda)\ \text{(median|max)}
\approx 2.58\times10^{-17}\ \big|\ 1.93\times10^{-16},
\]
with on-sphere deviation at the $10^{-16}$ level (median $\approx 3.33\times10^{-16}$, max  $\approx 7.77\times10^{-16}$).
\end{Ex}

\subsection{\texorpdfstring{$3\times3$}{3x3} examples of Mac\'ias--Virg\'os and Pereira--S\'aez}
\label{subsec:examples_mvps}
\begin{Ex}\label{ex:MVPS_19}
\emph{(Three distinct eigenvalues; \cite[Ex.~19]{MaciasVirgosPereiraSaez2014}).}
Consider
\[
A_{19}=
\begin{bmatrix}
\i & 0 & 0\\
\k & \j & 0\\
-3\i & 2\k & \k
\end{bmatrix}.
\]
The paper states
\[
\sigma_\ell(A_{19})=\{\i,\j,\k\}.
\]
Our computation confirms this: the three distinct eigenvalues $\i,\j,\k$ are recovered up to roundoff.
In a representative run, the eigenvalue-only certificates satisfy
\[
\begin{array}{rcl}
\mathrm{res}_{\min}(A_{19},\lambda)\ \text{(median|max)} 
&\approx& 
1.37\times10^{-18}\ \big|\ 9.18\times10^{-18},
\\[2mm]
\frac{\mathrm{res}_{\min}(A_{19},\lambda)}{\max\{1,\|A_{19}\|_2\}}\ \text{(median|max)}
&\approx&
2.68\times10^{-19}\ \big|\ 1.79\times10^{-18},
\end{array}
\]
while the corresponding eigenpair residuals are at the $10^{-16}$ level
(median $\approx 2.29\times10^{-16}$, max $\approx 3.48\times10^{-16}$), consistent with a well-conditioned isolated case.
\end{Ex}

\begin{Ex}\label{ex:MVPS_38}
\emph{(Pole is not a left eigenvalue; \cite[Ex.~38]{MaciasVirgosPereiraSaez2014}).}
Consider
\[
A_{38}=
\begin{bmatrix}
0 & \i & 1\\
3\i-\k & 0 & 1\\
\k & -1+\j+\k & 0
\end{bmatrix}.
\]
A key qualitative statement in the paper is that the pole $\pi_A=-\i$ is \emph{not} a left eigenvalue.
In a representative run, the computed candidates cluster into two distinct values; importantly, neither is close to $-\i$
(e.g., their distances to $-\i$ in $\RR^4$ are about $7.5\times10^{-1}$ and $1.6$), hence the membership test
``$-\i\notin\sigma_\ell(A_{38})$'' passes.
The certificates remain small but are less uniform than in Example~\ref{ex:MVPS_19}:
\[
\begin{array}{rcl}
\mathrm{res}_{\min}(A_{38},\lambda)\ \text{(median|max)}&\approx& 1.03\times10^{-15}\ \big|\ 1.38\times10^{-13},
\\[2mm]
\frac{\mathrm{res}_{\min}(A_{38},\lambda)}{\max\{1,\|A_{38}\|_2\}}\ \text{(median|max)}&\approx& 1.76\times10^{-16}\ \big|\ 2.53\times10^{-14}.
\end{array}
\]
\end{Ex}

\begin{Ex}\label{ex:MVPS_52}
\emph{(Rank-$3$ differential at $\lambda=0$; \cite[Ex.~52]{MaciasVirgosPereiraSaez2014}).}
Consider
\[
A_{52}=
\begin{bmatrix}
\j & 1 & 0\\
2\i & -\k & 1\\
2-\i-2\j & -1-\j+\k & -\i-\k
\end{bmatrix}.
\]
The paper states that $\lambda=0$ is a left eigenvalue and that the differential of the characteristic map at $\lambda=0$
has real rank $3$.
Numerically, we recover an eigenvalue extremely close to $0$ (in a representative run, $\lambda\approx 6.5\times10^{-6}\k$)
together with two additional distinct eigenvalues.
The near-zero eigenvalue is certified with
\[
\mathrm{res}_{\min}(A_{52},\lambda)\approx 2.99\times10^{-11},
\qquad
\frac{\mathrm{res}_{\min}(A_{52},\lambda)}{\max\{1,\|A_{52}\|_2\}}\approx 6.37\times10^{-12},
\]
while the other two eigenvalues are certified more strongly (down to $3.98\times10^{-17}$ in this run).
Moreover, a numerical rank test confirms rank $3$ at $\lambda=0$, matching the paper.
\end{Ex}

\begin{Ex}\label{ex:MVPS_55}
\emph{(Maximal differential rank; \cite[Ex.~55]{MaciasVirgosPereiraSaez2014}).}
Consider
\[
A_{55}=
\begin{bmatrix}
\k & 0 & 0\\
3\i-\j & -\i & \i\\
1-2\k & \j & -\j
\end{bmatrix}.
\]
The paper gives $\sigma_\ell(A_{55})=\{\k,\,0,\,-\i-\j\}$ and states that the differential has maximal rank at each eigenvalue.
Our computation recovers these three eigenvalues up to roundoff (perturbations at the $10^{-11}$ level in a representative run).
The certificates (median|max over the three values) satisfy
\[
\begin{array}{rcl}
\mathrm{res}_{\min}(A_{55},\lambda)\;
\text{(median|max)}
&\approx& 8.90\times10^{-12}\ \big|\ 2.23\times10^{-11},
\\[2mm]
\frac{\mathrm{res}_{\min}(A_{55},\lambda)}{\max\{1,\|A_{55}\|_2\}}\;
\text{(median|max)}
&\approx& 1.51\times10^{-12}\ \big|\ 4.08\times10^{-12},
\end{array}
\]
and $\lambda=0$ is certified near machine precision ($\mathrm{res}_{\min}\approx 6.1\times10^{-18}$ in this run).
The Jacobian-rank checks agree with the paper (maximal rank at all three points).
\end{Ex}

\begin{Ex}\label{ex:MVPS_56}
\emph{(Left-spectrum deficiency; \cite[Ex.~56]{MaciasVirgosPereiraSaez2014}).}
Consider
\[
A_{56}=
\begin{bmatrix}
-\i-\j & 0 & 0\\
\k & -\i & \i\\
1-\i & \j & -\j
\end{bmatrix}.
\]
The paper states that $\sigma_\ell(A_{56})=\{0,\,-\i-\j\}$ (left-spectrum deficient) and that the differential is maximal at
$\lambda=0$ but \emph{identically zero} at $\lambda=-\i-\j$ (null rank).
Numerically, we recover $\lambda=0$ with a tiny certificate ($\mathrm{res}_{\min}\approx 4.4\times10^{-18}$ in a representative run),
and the remaining candidates appear as very close approximations to $-\i-\j$ whose componentwise deviations are of order $10^{-6}$,
consistent with the reported degeneracy.
Quantitatively, in a representative run, the two such approximations satisfy
\[
\begin{array}{rcl}
\mathrm{res}_{\min}(A_{56},\lambda)\;
\text{(median|max)}
&\approx& 2.99\times10^{-11}\ \big|\ 3.75\times10^{-11},
\\[2mm]
\frac{\mathrm{res}_{\min}(A_{56},\lambda)}{\max\{1,\|A_{56}\|_2\}}\;
\text{(median|max)}
&\approx& 6.78\times10^{-12}\ \big|\ 8.49\times10^{-12}.
\end{array}
\]
The Jacobian-rank test matches the paper (maximal at $0$ and null at $-\i-\j$), and the reduced componentwise accuracy at
the null-rank eigenvalue is consistent with this extreme degeneracy.
\end{Ex}

\subsection{\texorpdfstring{$4\times4$}{4x4} dense example}
\label{subsec_examples_4x4}
\begin{Ex}\label{ex:PanNg2024_circ4}
\emph{(A dense $4\times4$ quaternion circulant matrix; \cite[Example~1]{PanNg2024}).}
Consider the dense quaternion circulant matrix $A\in\HH^{4\times 4}$:
\[
\renewcommand{\arraystretch}{1.0}
\setlength{\arraycolsep}{0pt}
\left[\!
\begin{array}{@{}c@{\,}c@{\,}c@{\,}c@{}}
 -2 + \i + \j + 4\k & 2 + 4\i + \j + \k & 1 + 3\i + 2\j + 2\k & -1 + 2\i + 2\j + 3\k \\
 -1 + 2\i + 2\j + 3\k & -2 + \i + \j + 4\k & 2 + 4\i + \j + \k & 1 + 3\i + 2\j + 2\k \\
 1 + 3\i + 2\j + 2\k & -1 + 2\i + 2\j + 3\k & -2 + \i + \j + 4\k & 2 + 4\i + \j + \k \\
 2 + 4\i + \j + \k & 1 + 3\i + 2\j + 2\k & -1 + 2\i + 2\j + 3\k & -2 + \i + \j + 4\k
\end{array}
\!\right].
\]
The authors use this example to illustrate block diagonalization of quaternion circulant matrices; the \emph{left} eigenvalues
are not reported in \cite{PanNg2024}.

Using our solver, we obtained four distinct left eigenvalues. Rounded to two significant digits, they are
\[
\begin{array}{rcl}
\lambda_1 & \approx& -1.5 - 0.43\i + 1.5\j + 4.6\k \\
\lambda_2 & \approx& -2 - 2\i + 2\k \\
\lambda_3 & \approx& -2.9 + 0.85\i - 1.2\j + 5.1\k \\
\lambda_4 & \approx& -5.2 + 1.5\i - 0.25\j + 1.9\k
\end{array}.
\]
The corresponding residual-based certificates are at the level of $10^{-16}$, consistent with near machine-precision accuracy for
the computed eigenpairs.
\end{Ex}

\section{Refinement of numerically found left eigenvalues}
\label{sec:refinement}
The results reported in Supplement~A were obtained by the reference implementation described in \ref{appB:ref_impl}, which implements the method of this paper.
Although the computed left eigenvalues typically come with small residuals, it is often beneficial to further \emph{polish} them.
This post-processing step improves numerical reproducibility, strengthens the credibility of rare or surprising instances, and, in many cases, brings the residuals down to (near) machine precision, as documented in the examples in the following section.

\subsection{Residual certificates and scale-aware normalization}
\label{subsec:residuals_refinement}
We use the residual-based certificates introduced in Section~\ref{subsec:residual_certificates}.
When $A$ is fixed by context, we abbreviate
$\mathrm{res}(\lambda,v):=\mathrm{res}(A,\lambda,v)$ and
$\mathrm{res}_{\min}(\lambda):=\mathrm{res}_{\min}(A,\lambda)$.

For an eigenpair candidate $(\lambda,v)$ with $\|v\|_2=1$, the (pair) residual is
\[
\mathrm{res}(A,\lambda,v)=\|Av-\lambda v\|_2.
\]
The eigenvalue-only certificate is the minimal residual
\[
\mathrm{res}_{\min}(A,\lambda)=\min_{\|x\|_2=1}\|Ax-\lambda x\|_2
=\sigma_{\min}\!\bigl(\rho(A-\lambda I)\bigr),
\]
where $\rho(\cdot)$ is the fixed real (or complex) embedding used for numerical linear-algebra kernels
and $\sigma_{\min}(\cdot)$ is the smallest singular value in that embedding.

Since scaling matters, we also report certificates normalized by a robust matrix scale,
\[
  s(A):=\max\{1,\|A\|_2\},\qquad
  \mathrm{res}_{\min}^{\mathrm{rel}}(A,\lambda):=\frac{\mathrm{res}_{\min}(A,\lambda)}{s(A)}.
\]
Reporting both absolute and normalized values separates genuine numerical difficulty from trivial rescaling effects.

\subsection{Two-stage polishing (standard post-processing)}
Given a candidate $\lambda_0$ returned by the main algorithm, we apply a light\-weight two-stage refinement strategy that is
standard in numerical linear algebra: first improve the \emph{certificate}, then perform a short local correction on the
eigenpair equations; see, e.g., \cite{GolubVanLoan2013}.

\noindent\emph{Stage 1: certificate-driven refinement in $\lambda$.}
We interpret $\lambda=a+b\i+c\j+d\k$ as a point in $\RR^4$ and (locally) minimize
\[
\phi(\lambda):=\mathrm{res}_{\min}(\lambda)=\sigma_{\min}(\rho(A-\lambda I)),
\]
starting from $\lambda_0$. In practice, this is done by a small number of random perturbations of $\lambda_0$ at a few radii,
followed by a short derivative-free local search initialized from the best sampled point \cite{NelderMead1965}. This stage returns a refined
$\lambda_\star$ and an associated near-minimizing vector $v_\star$ (e.g., a right singular vector corresponding to
$\sigma_{\min}(A-\lambda_\star I)$ in the chosen embedding).

\noindent\emph{Stage 2: local eigenpair correction.}
Starting from $(\lambda_\star,v_\star)$, we perform a small number of damped Newton (or quasi-Newton) correction steps on
\[
F(\lambda,v)=Av-\lambda v=0,\qquad \|v\|=1,
\]
using a simple gauge-fixing convention to remove the (right) scalar ambiguity of $v$. When Stage~1 has already reduced
$\mathrm{res}_{\min}$, this local correction typically converges rapidly and yields residuals close to machine precision.

\subsection{Practical interpretation}
The refinement pipeline improves reproducibility and strengthens evidence for rare instances: a reduction of $\mathrm{res}_{\min}(\lambda)$
certifies proximity to singularity of $A-\lambda I$ in the chosen embedding, while the final reported pair residual
$\mathrm{res}(A,\lambda,v)$ verifies a consistent eigenpair. Thresholds are always interpreted relative to a robust matrix scale, such as
$\max\{1,\|A\|_2\}$.

\section{Illustrative non-generic phenomena}
\label{sec:illustrative_nongeneric}

The primary goal of this paper is to provide an effective computational method rather than a systematic theoretical classification of quaternionic left spectra.
Still, once large-scale computations become practical, it is natural to ask whether the solver can also reveal instructive non-generic behavior.
In this section, we briefly report three representative phenomena observed in targeted experiments: (i) more than $n$ isolated left eigenvalues,
(ii) fewer than $n$ isolated left eigenvalues (left-spectrum deficiency), and (iii) a spherical component coexisting with isolated points.
In all cases, we used the core Newton solver together with the refinement strategies of Section~\ref{sec:refinement}.
Additional diagnostics and reproduction scripts are provided in Supplement~B.

\begin{Ex}[\emph{A $3\times3$ integer matrix with five distinct isolated left eigenvalues.}]
\label{ex:byprod_3x3_five}
The following $3\times3$ integer quaternion matrix exhibits \emph{five} distinct isolated left eigenvalues:
\[
A=
\begin{bmatrix}
-7 + 6\i - 6\j - 7\k & 3 - 7\i + 11\j - 2\k & 11 - 9\i \\
6 + \i - 5\j + 3\k & 9 + 7\i - 6\j - 10\k & -5 + 15\i + 14\j - \k\\
16 + 6\i + 14\j + 11\k & 20 - 3\i + 4\j + 6\k & -5 + 19\i + \j - 3\k
\end{bmatrix}.
\]
After refinement, five distinct candidates are
\[
\begin{aligned}
\lambda_1 &= -22.877487 + 15.850469\i - 17.069787\j - 11.791606\k,\\
\lambda_2 &= \phantom{-}11.833188 + \phantom{0}9.698189\i - 13.382634\j - 19.325731\k,\\
\lambda_3 &= \phantom{-}13.399540 + 15.934883\i - 12.000914\j - \phantom{0}0.414566\k,\\
\lambda_4 &= \phantom{-}14.897483 + 16.835221\i - 11.965564\j - \phantom{0}2.863713\k,\\
\lambda_5 &= \phantom{-}21.109974 + 21.579378\i + \phantom{0}5.435201\j - \phantom{0}2.138868\k,
\end{aligned}
\]
with eigenvalue-only certificates
\[
\mathrm{res}_{\min}(\lambda_k)\in[\,1.394\times10^{-16},\,3.716\times10^{-15}\,],
\]
and minimum pairwise separation (in the $\RR^4$ component metric) of approximately
\[
\min_{i\neq j}\|\lambda_i-\lambda_j\|_{\RR^4}\approx 3.00899.
\]
\end{Ex}

\begin{Ex}[\emph{Less than $n$ isolated left eigenvalues (left-spectrum deficiency).}]
\label{ex:byprod_4x4_two}
The following $4\times4$ quaternion matrix was obtained during targeted searches for left-spectrum deficient cases.
When requesting $K=4$ eigenpairs, repeated multi-start runs consistently converged (up to clustering) to only two isolated solutions:
\[
A=
\begin{bmatrix}
1 + 3\i + \k & 8 + \i + \j - 4\k & -8 + 4\k & -7 - 2\i + 4\j + \k \\
-\i - \j & 1 + \i - 2\j + \k & 0 & \i + \j \\
-\i - \j & -1 - 6\j + 3\k & 2 + \i + 4\j - 2\k & 2 + \i + \j - \k \\
0 & 8 - 4\k & -8 + 4\k & -6 + \i + 4\j + 2\k
\end{bmatrix}.
\]
A refined list of distinct candidates is
\[
\lambda_1 = 1 + 2\i - \j + \k,
\qquad
\lambda_2 = -2 + \i + 4\j,
\]
with certificates
\[
\mathrm{res}_{\min}(\lambda_1)\approx 1.07\times10^{-13},
\qquad
\mathrm{res}_{\min}(\lambda_2)\approx 1.69\times10^{-13}.
\]
Thus, the finite left spectrum has cardinality $2<4$, i.e., $A$ is left-spectrum deficient.
In practice, distinctness counting is sensitive to the duplicate threshold: with very tight tolerances (e.g., $10^{-8}$ in $\RR^4$), numerical outputs may form two small clouds around $\lambda_1$ and $\lambda_2$ and can be over-counted; a slightly looser tolerance (e.g., $10^{-5}$) collapses these clouds into the two representatives above.
\end{Ex}

\begin{Ex}[\emph{A spherical component together with isolated points.}]
\label{ex:byprod_spherical}
Spherical components did not arise spontaneously in our random benchmark suites, but can be constructed. One example is the following $4\times4$ matrix $A_{\mathrm{sph}}$:
{\small
\[
\setlength{\arraycolsep}{2pt}\renewcommand{\arraystretch}{1.05}
\begin{bmatrix}
2+5\i-5\j+6\k & 12-3\i-5\j+4\k & 8-\i-\j-2\k & 20-4\i+2\j+2\k\\
4\j & 10+4\i-2\j+4\k & 4\j & 8\j\\
8-\i+3\j-2\k & 28-5\i-3\j & 2+5\i-\j+6\k & 20-4\i+10\j+2\k\\
-4\j & -20+4\i-6\j-2\k & -4\j & -10+8\i-16\j+2\k
\end{bmatrix}
\]}
A run returned $K_{\mathrm{tot}}=20$ distinct candidates; after refinement,
$\mathrm{med}\mathrm{res}_{\min}(\lambda)$ $\approx 2.97\times10^{-16}$ and
$\max \mathrm{res}_{\min}(\lambda)\approx 1.60\times10^{-15}$.
With $\|A_{\mathrm{sph}}\|_2\approx 55.23$, this corresponds to
$\mathrm{med}\,(\mathrm{res}_{\min}(\lambda)/\|A_{\mathrm{sph}}\|_2)\approx 5.37\times10^{-18}$ and
$\max (\mathrm{res}_{\min}(\lambda)/$ $\|A_{\mathrm{sph}}\|_2)$ $\approx 2.89\times10^{-17}$.

A spherical cluster with $18$ inliers was detected, with center $c=10+4\i-6\j+4\k$ and radius $r=8$ in the affine $3$-space
$\{\lambda\in\HH:\ \mathrm{coeff}_{\j}(\lambda)=-6\}$, i.e.,
\[
\mathcal S=\Bigl\{\,c+8\bigl(u_0+u_1\i+u_2\k\bigr):\ u_0^2+u_1^2+u_2^2=1\,\Bigr\}.
\]
Two representative refined inliers are
$\lambda^{(2)}_{a}\approx 4.95 + 7.76\i - 6\j + 8.93\k$ and
$\lambda^{(2)}_{b}\approx 2.57 + 6.92\i - 6\j + 3.50\k$,
while the remaining two candidates were classified as isolated and were very close to
$-6+6\i-4\j+8\k$ and $-10+8\i-8\j+2\k$ (full precision and diagnostics in Supplement~B).
\end{Ex}

\section*{Data and code availability}
\label{sec:data_code_availability}
All numerical results reported in this paper were generated by the authors using an in-house MATLAB benchmarking framework.
To support transparency and practical reuse—without releasing the full experiment infrastructure—we provide a compact,
stand-alone MATLAB reference implementation of the proposed Newton-based method in the public GitHub repository
\href{https://github.com/michaelsebek/leigqNEWTON-public}{\texttt{michaelsebek/leigqNEWTON-public}}.
The repository contains the solver, residual-certificate validation for all accepted eigenpairs, user-facing documentation, and a curated bundle of reproducible examples.
An immutable archival snapshot of the software is deposited on Zenodo (DOI: \url{https://doi.org/10.5281/zenodo.18410141}).

The public package is intentionally minimal: it contains a single main solver function together with a small set of examples and self-tests. It enables readers to (i) run the method on user-supplied quaternion matrices, and
(ii) validate computed eigenpairs via residual-based certificates.
The released code depends only on MATLAB and its built-in \texttt{quaternion} data type; no third-party quaternion toolboxes are required.

The large-scale benchmarking framework used to generate the tables and figures in this article is not part of the public distribution, because it relies on a broader internal research toolbox and auxiliary scripts that are outside the scope of a stable long-term public release.

\section{Conclusions}
We developed a practical Newton-based framework for computing left eigenvalues of quaternion matrices.
The method targets \emph{isolated} left eigenpairs and proceeds one eigenpair at a time, combining a simple gauge that fixes the right scalar ambiguity of eigenvectors with a real-embedded Newton correction assembled via the embedding $\rho(\cdot)$.
As a result, each correction step reduces to a single square real linear system and can be handled by standard real linear algebra, while inputs and outputs remain quaternionic. Beyond isolated eigenvalues, repeated multi-start runs provide certified samples that enable reliable detection and
parameterization of spherical components of $\sigma_\ell(A)$ (Supplement~B).

On the theoretical side, we proved local quadratic convergence for \emph{simple gauged} isolated eigenpairs, i.e., precisely when the Jacobian of the gauged system is invertible. At the same time, non-isolated solution sets (most notably spherical components of the left spectrum) necessarily lead to rank-deficient Jacobians, so Newton cannot be expected to ``enumerate''
a continuum or to converge quadratically to a uniquely determined root. Nevertheless, in such cases, multi-start runs typically converge to multiple distinct points on the solution manifold; collecting these samples and applying a lightweight post-processing step (clustering and sphere fitting in $\RR^4$) provides reproducible numerical evidence for spherical
components, supported by residual-based certificates.

Computationally, the resulting multi-start pipeline (trial--accept--de-du\-pli\-cate, optionally followed by polishing) produced highly accurate certificates once convergence is achieved, across the tested matrix families and sizes. To support reproducibility and reuse, we provide a compact MATLAB reference implementation together with complete supplementary material (documenting the experimental protocol, full tables and figures) and a \emph{Zoo} of illustrative non-generic examples.
Beyond serving as regression tests, this Zoo offers a first glimpse of the variety of spectral phenomena that can arise for larger quaternion matrices, which become accessible once large-scale left eigenvalue computations are practical.

\appendix

\section{Complex embedding}
\label{app:complex_embedding}
This appendix records an \emph{alternative} route to express the Newton correction using the classical complex adjoint (symplectic) embedding $\chi(\cdot)$. While our reference implementation uses the real embedding $\rho(\cdot)$, the same correction can be assembled in complex blocks; we sketch the conversion and explain why we do not adopt it as the reference.

\subsection{Definition and the key equivalence}
Fix the complex slice $\CC_\i:=\{a+b\i:\ a,b\in\RR\}\subset\HH$. Every $A\in\HH^{n\times n}$ can be written as
$A=X+Y\j$ with $X,Y\in\CC_\i^{n\times n}$; see, e.g., \cite{Zhang1997,Rodman2014}.

\begin{Def}[Complex adjoint / symplectic embedding]
\label{def:chi}
For $A=X+Y\j$ with $X,Y\in\CC_\i^{n\times n}$ define
\[
\chi(A):=
\begin{bmatrix}
X & Y\\
-\overline{Y} & \overline{X}
\end{bmatrix}\in\CC^{2n\times 2n}.
\]

\end{Def}

Write $x\in\HH^n$ uniquely as $x=u+v\j$ with $u,v\in\CC_\i^n$, and define the real-linear map
\[
\phi:\HH^n\to\CC^{2n},\qquad
\phi(u+v\j):=\begin{bmatrix}u\\-\overline{v}\end{bmatrix}.
\]

\begin{Pro}[Minimal properties used in this paper]
\label{pro:chi_minimal_app}
For all $A,B\in\HH^{n\times n}$,
\[
\chi(A+B)=\chi(A)+\chi(B),\qquad \chi(AB)=\chi(A)\chi(B),\qquad \chi(A^\ast)=\chi(A)^\ast.
\]
Moreover, for every $A\in\HH^{n\times n}$ and $x\in\HH^n$,
\begin{equation}
\label{eq:phiAx_chi}
\phi(Ax)=\chi(A)\,\phi(x).
\end{equation}
Consequently, the quaternionic linear system $Ax=b$ is equivalent to the complex linear system
$\chi(A)\phi(x)=\phi(b)$ (and hence $A$ is invertible over $\HH$ if and only if $\chi(A)$ is invertible over $\CC$).
\end{Pro}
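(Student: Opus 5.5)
The plan is to verify everything by direct block computation, using a single commutation rule: for $z\in\CC_\i$ one has $\j z=\bar z\,\j$, which follows from $\j\i=-\i\j$. Extended entrywise, this gives $\j Z=\overline{Z}\,\j$ for any complex matrix or vector $Z$ over $\CC_\i$. Additivity of $\chi$ is immediate, since the decomposition $A=X+Y\j$ is unique and real-linear in $A$, and complex conjugation is additive.

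\emph{Multiplicativity.} Write $A=X+Y\j$ and $B=Z+W\j$. Expanding and using $\j Z=\overline Z\j$ and $\j^2=-1$ gives
\[
AB=(XZ-Y\overline W)+(XW+Y\overline Z)\j .
\]
Multiplying the blocks $\chi(A)\chi(B)$ produces $XZ-Y\overline W$ and $XW+Y\overline Z$ in the top row. The bottom row is $-\overline{Y}Z-\overline X\,\overline W$ and $-\overline Y W+\overline X\,\overline Z$, which are exactly $-\overline{(XW+Y\overline Z)}$ and $\overline{(XZ-Y\overline W)}$. This matches $\chi(AB)$.

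\emph{Adjoint.} For the entries I use $\overline{y\j}=\bar\j\,\bar y=-\j\bar y=-y\j$. Hence $(Y\j)^\ast=-Y^{\top}\j$, and so $A^\ast=X^\ast+(-Y^\top)\j$. Consequently
\[
\chi(A^\ast)=\begin{bmatrix}X^\ast&-Y^\top\\ Y^\ast& X^\top\end{bmatrix},
\]
which is precisely the conjugate transpose of $\chi(A)$.

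\emph{The intertwining relation \eqref{eq:phiAx_chi}.} I will carry out the same expansion with $x=u+v\j$:
\[
Ax=(Xu-Y\overline v)+(Xv+Y\overline u)\j .
\]
This gives $\phi(Ax)=\bigl[\,Xu-Y\overline v\,;\ -\overline X\,\overline v-\overline Y u\,\bigr]$. Computing $\chi(A)\bigl[u;-\overline v\bigr]$ directly yields the same two blocks.

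\emph{Consequences.} The map $\phi$ is an $\RR$-linear bijection $\HH^n\to\CC^{2n}$; it is injective by uniqueness of $u,v$, and both sides have real dimension $4n$. Applying $\phi$ to $Ax=b$ therefore gives an equivalent equation $\chi(A)\phi(x)=\phi(b)$. In particular, $A$ is injective if and only if $\chi(A)$ is. By Proposition~\ref{pro_rank_nullity} (for $A$) and ordinary rank--nullity (for $\chi(A)$), injectivity is equivalent to invertibility in both settings.

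The only genuinely delicate point is sign and conjugation bookkeeping: the rule $\j z=\bar z\j$, the factor $-\overline v$ in $\phi$, and the transpose-versus-conjugate distinction in $(Y\j)^\ast$. I would double-check these against scalar cases, for example $A=[\j]$.
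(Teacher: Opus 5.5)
Your proposal is correct and follows the same route as the paper: its proof cites the standard references and says that \eqref{eq:phiAx_chi} follows by expanding $A=X+Y\j$, $x=u+v\j$ with the rule $\j z=\bar z\,\j$, and you carry out exactly this expansion in full, apply it to multiplicativity and the adjoint identity as well, and all your block computations check out. Your derivation of the consequence, via the $\RR$-linear bijection $\phi$ (which identifies $\ker\chi(A)$ with $\phi(\ker A)$) together with rank--nullity on both sides, is also sound.
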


\begin{proof}
Standard; see \cite{Zhang1997,Rodman2014}. Identity \eqref{eq:phiAx_chi} follows by writing $A=X+Y\j$, $x=u+v\j$ and using
$\j z=\overline{z}\,\j$ for $z\in\CC_\i$.
\end{proof}

\subsection{Sketch: complex-embedded Newton correction}
The quaternion-form Newton correction (with gauge constraint) solves
\[
(A-\lambda I)\Delta x-(\Delta\lambda)x=-d,\qquad Dg(x)[\Delta x]=0,
\]
for $(\Delta\lambda,\Delta x)\in\HH\times\HH^n$, where $d:=Ax-\lambda x$ is the current eigenpair defect.
Transporting the first (vector) equation through $\phi(\cdot)$ and $\chi(\cdot)$ gives the $2n$-dimensional complex-block form
\begin{equation}
\label{eq:chi_newton_core}
\chi(A-\lambda I)\,\phi(\Delta x)-\phi\bigl((\Delta\lambda)x\bigr)=-\phi(d),
\end{equation}
while the gauge contributes four additional real scalar equations.

The only subtlety is the left-multiplication term $(\Delta\lambda)x$. Writing $\Delta\lambda=\delta_1+\delta_2\j$ and
$x=u+v\j$ with $\delta_1,\delta_2\in\CC_\i$ and $u,v\in\CC_\i^n$, one computes
\[
(\Delta\lambda)x=(\delta_1 u-\delta_2\overline{v})+(\delta_1 v+\delta_2\overline{u})\j,
\qquad
\phi\bigl((\Delta\lambda)x\bigr)=
\begin{bmatrix}
\delta_1 u-\delta_2\overline{v}\\
-\overline{\delta_1}\,\overline{v}-\overline{\delta_2}\,u
\end{bmatrix}.
\]
Thus \eqref{eq:chi_newton_core} is \emph{real-linear} (as it must be), but \emph{not complex-linear} in $(\delta_1,\delta_2)$ because
conjugates of the unknowns appear. A purely complex-linear solve would therefore require augmenting the unknowns by their conjugates, which removes the putative advantage of working in complex arithmetic.

In summary, the $\chi$-based formulation is perfectly valid and equivalent to the real-embedding route, but it introduces extra bookkeeping (maintaining a fixed slice $\CC_\i$, consistent $(u,v)$ representations, and real/complex coupling with the gauge). The real embedding $\rho(\cdot)$ yields a single uniform real system with simpler assembly and diagnostics, so we use
$\rho(\cdot)$ in the main text and in the reference implementation.

\section{Implementation and performance}
\label{app:implementation}
This appendix provides a compact summary of the released reference implementation and of the numerical validation.
Complete per-size tables, additional figures, and the full experiment protocol are provided in Supplement~A.

\subsection{Reference implementation}
\label{appB:ref_impl}
A compact, stand-alone MATLAB reference implementation of the proposed Newton-based method is publicly available
\linebreak[2]
(see \emph{Data and code availability} section).
The released package is intentionally minimal: it provides a single main solver routine for user-supplied quaternion matrices together with residual-certificate validation for all accepted eigenpairs; no third-party quaternion toolboxes are required.

The solver keeps a quaternionic user interface (inputs/outputs in $\HH^{n\times n}$ and $\HH^{n}$) but executes each Newton correction using standard real/complex linear-algebra kernels (solves/factorizations) applied to embedded matrices. The reference implementation uses the real embedding $\rho(\cdot)$; an alternative route via the complex adjoint $\chi(\cdot)$
(\ref{app:complex_embedding}) is also supported. Certificates are evaluated in the Euclidean norm on $\HH^n$ (equivalently, the $\ell_2$ norm of stacked real components), hence they are independent of the internal embedding used for the
linear solve.

To compute multiple eigenpairs, the solver performs independent Newton \emph{trials}. Each trial generates an initial guess $(\lambda^{(0)},x^{(0)})$, applies a bounded number of Newton steps, and returns either a candidate or a failure. A candidate is accepted only if its certificate falls below a prescribed threshold; accepted eigenpairs are then de-duplicated (up to a user tolerance) until the requested target $K$ is reached.

Two practical shortcuts are available. For diagonal/triangular inputs, the default is to return the diagonal entries (with multiplicity), bypassing Newton; this shortcut was disabled in our benchmark runner. For singular inputs, the solver estimates the right nullity $\nu_0:=\dim\Ker(A)=n-\rank(A)$ (equivalently via $\rank(\rho(A))=4\rank(A)$), pre-fills $\nu_0$ copies of
$\lambda=0$ together with a quaternion basis of $\Ker(A)$ (validated by the same certificate checks), and runs Newton only for the remaining $K-\nu_0$ eigenpairs.

Nonconvergence, convergence to an already detected eigenvalue, or insufficient residual reduction triggers a restart (a fresh trial). Trial/iteration statistics (iterations, trials, restarts, timing) are recorded; full tables and detailed logs are provided in Supplement~A.

\subsection{Reported measures}
\label{app:reported_metrics}
For each test matrix $A\in\HH^{n\times n}$ we request $K$ distinct left eigenpairs and obtain, after acceptance and de-duplication,
$K_{\rm found}$ accepted pairs $\{(\lambda_k,v_k)\}$ with $\|v_k\|_2=1$.
Accuracy is certified by the eigenpair residual
$\mathrm{res}_k:=\|Av_k-\lambda_k v_k\|_2$ and summarized by
$\mathrm{maxRes}(A):=\max_k \mathrm{res}_k$ and, over an ensemble,
$\mathrm{med}(\mathrm{maxRes}):=\mathrm{median}_A(\mathrm{maxRes}(A))$.
The acceptance rate is $\mathrm{acc}(A)(\%):=100\,K_{\rm found}/K$, and runtime is summarized by the mean time per accepted
eigenpair $\overline{t}_\lambda$ (ms). Additional diagnostic counters (trials, restarts, iterations, and per-trial statistics)
are reported in Supplement~A.

\subsection{Benchmark setup and success criteria}
\label{app:benchmark_setup}
Because the \emph{left} spectrum is not invariant under quaternionic similarity, we do not benchmark by prescribing spectra and applying similarity transforms. Instead, we test directly constructed matrix families that scale with $n$ and reflect standard structures in numerical linear algebra.

For each matrix size from $2\times2$ up to $64\times64$ and for each family, we generate a size-dependent number $n_{\rm Mat}$ of independent samples: we use very large ensembles for the smaller sizes and gradually reduce the sample count as $n$ increases, down to $n_{\rm Mat}=20$ at the largest size. For each matrix, we request $K=n$ distinct left eigenpairs. A run is declared \emph{successful} only if $K_{\rm found}=n$ after acceptance and de-duplication (equivalently, $\mathrm{acc}(A)(\%)=100$); otherwise the outcome is treated as a failure within the prescribed trial budget and is recorded for reproducibility in Supplement~A.

We consider four representative families: random upper-triangular matrices with random diagonal (a controlled sanity check, since diagonal entries must be left eigenvalues); dense i.i.d.\ Gaussian matrices; Hermitian matrices
$A=\tfrac12(R+R^\ast)$; and sparse matrices with fixed density $p=0.1$ obtained by masking i.i.d.\ Gaussian entries.
Supplement~A contains the complete per-size tables, detailed diagnostic statistics, benchmark figures, and the full experiment protocol.

Finally, spectrum-deficient cases (fewer than $n$ distinct isolated left eigenvalues) and continuous components (e.g., spherical families) are not targeted in this benchmark suite. They are handled by dedicated diagnostic modes and are reported separately in Supplement~B (Zoo) and, where relevant, in Supplement~A.

\subsection{Summary of benchmark observations}
\label{app:bench_takeaways}
The benchmark suite tests the solver as an end-to-end computational pipeline: multi-start Newton trials, acceptance by residual-based certificates, and de-duplication to obtain $K=n$ distinct eigenpairs per matrix. Across all tested sizes up to $n=64$ and across all four matrix families, this pipeline achieved full recovery in the benchmark sense on essentially all sampled instances, i.e., $\mathrm{acc}(A)(\%)=100$ (equivalently, $K_{\rm found}=n$). Supplement~A reports the complete per-size tables together with trial/iteration/restart diagnostics.

The upper-triangular family provides a controlled sanity check: the left eigenvalues are the diagonal entries, and the computed pairs attain residuals close to machine precision. For dense unstructured and Hermitian ensembles, the solver likewise returns eigenpairs with small certified residuals; the dominant cost is the sequence of embedded linear solves in the Newton corrections, together with occasional restarts when a trial converges to an already detected eigenvalue or fails to reduce the residual sufficiently.

Sparse inputs are the most variable: at smaller sizes, they may exhibit larger nullspaces and, more broadly, they require careful restart budgeting to maintain a high success rate. Nevertheless, the same acceptance-and-certificate mechanism remains effective, and Supplement~A documents both typical behavior and rare hard instances (including reproducibility captures).

Two caveats help interpret these results. First, the benchmark mode targets the generic situation of $n$ isolated left eigenvalues; it does not aim to systematically trigger non-isolated components (e.g., spherical families) or
left-spectrum-deficient behavior. When such phenomena occur, the implementation provides diagnostic modes, documented and illustrated in Supplement~B (Zoo). Second, if an application demands tighter accuracy, the refinement strategies of
Section~\ref{sec:refinement} typically improve residual-based certificates to (near) machine precision.

\section*{Supplementary material}
\label{sec:supplementary_material}
Supplementary material accompanies this article and is available online at the publisher's platform.
It is split into two parts.

\emph{Supplement~A (Implementation, experiments, and performance).}
This part documents the experimental protocol (including the precise definitions of all reported metrics), summarizes the tested matrix families together with the observed benchmark behavior, and provides the complete
per-size numerical tables and all benchmark figures.
It also reports implementation-facing diagnostics (trials, restarts, and iteration counts), describes a reproducibility-oriented problem-capture policy for rare hard instances, and includes a short refinement note and code-availability information.

\emph{Supplement~B (Zoo of interesting matrices).}
This part collects additional worked examples that arose during development and testing, including matrices with more than $n$ isolated left eigenvalues, left-spectrum deficient constructions (fewer than $n$ isolated eigenvalues), and spherical or mixed configurations, together with numerical certificates and reproduction-ready data.

\section*{Acknowledgements}
This work was co-funded by the European Union under the project RO\-BO\-PROX (reg. no. CZ.02.01.01/00/22\_008/0004590).

\section*{Declaration of competing interest}
None declared.

\section*{Declaration of generative AI and AI-assisted technologies in the manuscript preparation process}
During the preparation of this work, the author used ChatGPT \mbox{(OpenAI)}, a generative AI tool, in order to
(i) support the drafting and refinement of the related-work overview by suggesting search terms and
candidate thematic connections to relevant literature,
(ii) assist with MATLAB coding and debugging, including refactoring research code into a stand-alone
public reference implementation, and
(iii) improve the language, clarity, and readability of the manuscript.
After using this tool, the author reviewed, verified, and edited all content as needed, validated the software by testing, and takes full responsibility for the content of the published article.



\begin{thebibliography}{00}
\bibitem{ColomboSabadiniStruppa2011}
F.~Colombo, I.~Sabadini, D.~C.~Struppa,
\emph{Noncommutative Functional Calculus: Theory and Applications of Slice Hyperholomorphic Functions},
Progress in Mathematics, Vol.~289,
Birkh\"auser/Springer, Basel, 2011.
\DOI{10.1007/978-3-0348-0110-2}

\bibitem{DeLeoScolariciSolombrino2002}
S.~De~Leo, G.~Scolarici, and L.~Solombrino.
\newblock Quaternionic eigenvalue problem.
\newblock \emph{J.\ Math.\ Phys.}, 43(11):5815--5829, 2002.
\newblock \DOI{10.1063/1.1511789}.

\bibitem{DennisSchnabel1996}
J.~E.~Dennis and R.~B.~Schnabel.
\newblock \emph{Numerical Methods for Unconstrained Optimization and Nonlinear Equations}.
\newblock SIAM, Philadelphia, 1996 (reprint of 1983 ed.).

\bibitem{Deuflhard2011}
P.~Deuflhard.
\newblock \emph{Newton Methods for Nonlinear Problems: Affine Invariance and Adaptive Algorithms}.
\newblock Springer, Berlin, 2011.

\bibitem{FarenickPidkowich2003}
D.R.~Farenick, B.A.F.~Pidkowich,
\newblock The spectral theorem in quaternions,
\newblock \emph{Linear Algebra Appl.} 371 (2003) 75--102.
\newblock \DOI{10.1016/S0024-3795(03)00420-8}

\bibitem{GolubVanLoan2013}
G.~H. Golub and C.~F. Van Loan,
\emph{Matrix Computations}, 4th ed.,
Johns Hopkins University Press, Baltimore, 2013.

\bibitem{HuangSo2001}
L.~Huang and W.~So.
\newblock On left eigenvalues of a quaternionic matrix.
\newblock \emph{Linear Algebra Appl.}, 323(1--3):105--116, 2001.
\newblock \DOI{10.1016/S0024-3795(00)00246-9}.

\bibitem{LiuKou2019}
W.~Liu and K.~I.~Kou.
\newblock Quaternionic left eigenvalue problem: a matrix representation.
\newblock \emph{arXiv preprint} arXiv:1903.08897, 2019.
\newblock \DOI{10.48550/arXiv.1903.08897}.

\bibitem{MaciasVirgosPereiraSaez2009}
E.~Mac{\'i}as-Virg{\'o}s and M.~J.~Pereira-S{\'a}ez.
\newblock Left eigenvalues of $2\times 2$ symplectic matrices.
\newblock \emph{Electron.\ J.\ Linear Algebra}, 18:274--280, 2009.

\bibitem{MaciasVirgosPereiraSaez2014}
E.~Mac{\'i}as-Virg{\'o}s and M.~J.~Pereira-S{\'a}ez.
\newblock A topological approach to left eigenvalues of quaternionic matrices.
\newblock \emph{Linear Multilinear Algebra}, 62(2):139--158, 2014.
\newblock \DOI{10.1080/03081087.2012.753599}.

\bibitem{MaciasVirgosPereiraSaez2014b}
E.~Mac{\'i}as-Virg{\'o}s and M.~J.~Pereira-S{\'a}ez.
\newblock Cayley--Hamilton theorem for left eigenvalues of $3\times 3$ quaternionic matrices.
\newblock \emph{Special Matrices}, 2(1):11--18, 2014.
\newblock \DOI{10.2478/SPMA-2014-0002}.

\bibitem{NelderMead1965}
J.~A. Nelder and R.~Mead,
A simplex method for function minimization,
\emph{The Computer Journal} 7 (1965) 308--313.

\bibitem{OrtegaRheinboldt2000}
J.~M.~Ortega and W.~C.~Rheinboldt.
\newblock \emph{Iterative Solution of Nonlinear Equations in Several Variables}.
\newblock SIAM Classics in Applied Mathematics, SIAM, Philadelphia, 2000 (reprint).

\bibitem{PanNg2024}
J.~Pan and M.~K.~Ng,
\newblock Block-Diagonalization of Quaternion Circulant Matrices with Applications,
\newblock \emph{SIAM J. Matrix Anal. Appl.} 45(3):1429--1454, 2024.
\newblock \DOI{10.1137/23M1552115}.

\bibitem{Rodman2014}
L.~Rodman.
\newblock \emph{Topics in Quaternion Linear Algebra}.
\newblock Princeton Series in Applied Mathematics.
\newblock Princeton University Press, Princeton, NJ, 2014.
\newblock \DOI{10.1515/9781400852741}.

\bibitem{Ward1997}
J.~P.~Ward,
\emph{Quaternions and Cayley Numbers: Algebra and Applications},
Mathematics and Its Applications, Vol.~403,
Kluwer Academic Publishers, Dordrecht, 1997.

\bibitem{Wood1985}
R.~M.~W.~Wood.
\newblock Quaternionic eigenvalues.
\newblock \emph{Bull.\ London Math.\ Soc.}, 17(2):137--138, 1985.
\newblock \DOI{10.1112/blms/17.2.137}.

\bibitem{Zhang1997}
F.~Zhang.
\newblock Quaternions and matrices of quaternions.
\newblock \emph{Linear Algebra Appl.}, 251:21--57, 1997.
\newblock \DOI{10.1016/S0024-3795(95)00543-9}.

\bibitem{Zhang2007}
F.~Zhang.
\newblock Gerschgorin type theorems for quaternionic matrices.
\newblock \emph{Linear Algebra Appl.}, 424(2):1399--1413, 2007.
\newblock \DOI{10.1016/j.laa.2007.01.006}.

\end{thebibliography}
\end{document}